\newtheorem{thm}{Theorem}[section]
\newtheorem{cor}[thm]{Corollary}
\newtheorem{lem}[thm]{Lemma}
\newtheorem{prop}[thm]{Proposition}
\newtheorem{prp}[thm]{Property}
\newtheorem{defn}[thm]{Definition}
\numberwithin{equation}{section}
\begin{document}

\title{Hyperk\"{a}hler metric and GMN ansatz on focus-focus fibrations}
\author{ Jie \ Zhao }
\address{Jie Zhao, University of Wisconsin-Madison, Madison, WI 53705, USA}
\email{jzhao@math.wisc.edu}

\begin{abstract}
In this paper, we study hyperk\"{a}hler metric and practice GMN's construction of hyperk\"{a}hler metric on focus-focus fibrations. We explicitly compute the action-angel coordinates on the local model of focus-focus fibration, and show its semi-global invariant should be harmonic to admit a compatible holomorphic 2-form. Then we study the canonical semi-flat metric on it.  After the instanton correction, finally we get a reconstruction of the generalized Ooguri-Vafa metric.
\end{abstract}
\maketitle

The well-known Calabi conjecture solved by Yau \cite{34} in 1978 promises that given any elliptic K3 surface there always exists a unique hyperk\"{a}hler metric in each given K\"{a}hler class. After this fundamental existence result, it has been an open problem to write down the explicit expression of such metrics. 

Motivated by the celebrated Strominger-Yau-Zaslow conjecture \cite{31}, nowadays it is a folklore that the hyperk\"{a}hler metrics near large complex limits are approximated by semi-flat metrics with instanton correction from  the holomorphic discs with boundaries on special Lagrangian fibres \cite{10}. The semi-flat metric is wrote down under the special Lagrangian fibration setting in \cite{15}. Later, Gross and Wilson \cite{17} proved that such hyperk\"{a}hler metrics  indeed can be approximated by the semi-flat metrics glued with generalized Ooguri-Vafa metrics around each singular fibre. However, in this procedure the instanton corrections are not included.

Recently, Gaiotto, Moore and Neitzke make a significant  breakthrough and propose a new approach on this problem with the instanton corrctions in their papers \cite{12} \cite{13}. It brings lots of new ingredients into the field, which includes: Kontsevich-Soibelman wall-crossing formula on BPS states or generalized Donaldson-Thomas invariants \cite{21}, and construction of twistor spaces of hyperk\"{a}hler metrics \cite{18} from associated Riemann-Hilbert problems determined by the wall-crossing data.

In this paper, we try to practice the GMN's construction in one of the important cases of completely integrable systems: focus-focus fibration. Follow V\~u Ng\d{o}c's classification result on focus-focus fibration \cite{27}, here we view the local model in \cite{27} as a total neighborhood of an $A_1$ singular fibre in the special Lagrangian fibration of an elliptic K3 surface (up to some symplectomorphism). We adapt GMN's construction as outlined in \cite{26} on the local model, and study the explicit hyperk\"{a}hler metric on it. The paper is organized as follows:

First, we compute the explicit action-angle coordinates on the local model. Then follow Arnold's integration over vanishing cycle technique \cite{1}, we show the semi-global invariant $S$ introduced by V\~u Ng\d{o}c in \cite{27} on the local model should be a harmonic function to admit a compatible holomorphic 2-form.

Second, we study the canonical semi-flat metric on the regular part of local model. We show that under certain conditions on the semi-global invariant $S$, such semi-flat metric will become a real hyperk\"{a}hler metric on the regular part of the fibration. 

Finally, we apply the GMN ansatz to modify the semi-flat metric to a global metric with central fibre completion. In stead of making the modification on the metric directly, we consider the associated twistor space and translate the problem into a Riemann-Hilbert problem on the holomorphic Darboux coordinates of holomorphic 2-forms. Then we follow the GMN integral ansatz to solve the Riemann-Hilbert problem and construct the modified twistor space. From the twistor space, we achieve the final modified metric. It turns out to be the generalized Ooguri-Vafa metric with similar extra harmonic term in the potential function as used in Gross-Wilson's work on hyperk\"{a}hler metric\cite{17}.

\section{focus-focus fibration}

\begin{defn} A Lagrangian fibration $f: (M, \omega) \rightarrow B \in \mathbb{R}^2$ is called a focus-focus fibration if: (1) each fibre is compact; (2) the central fibre $\pi^{-1}(0)$ is the unique singular fibre, which has one $A_1$ singularity, i.e. the central fibre is a pinched torus.
\end{defn}

\begin{defn} Two focus-focus fibrations $f_i: (M_i, \omega_i) \rightarrow B_i$ with $i=1,2$ are called equivalent if there exist subsets $\widetilde{B}_i \subset B_i$ such that we have the following bundle symplectomorphism:
$$\xymatrix{
(M_1|_{\widetilde{B}_1},\omega_1) \ar[rr]^{F} \ar[d]^{f_1} && (M_2|_{\widetilde{B}_2},\omega_2) \ar[d]^{f_2} \\
\widetilde{B}_1 \ar[rr]^{g}  && \widetilde{B}_2
}$$

\end{defn}

For the further construction, we need a quick review of the local model of focus-focus fibration, and also the classification result of focus-focus fibration founded by V\~u Ng\d{o}c \cite{27} here.

\noindent \textbf{Focus-focus Singularity.} Take the space $W= \mathbb{R}^2 \times \mathbb{R}^2$, with the symplectic structure:
\begin{displaymath}
\omega_{can} = dx_1 \wedge dy_1 + dx_2 \wedge dy_2
\end{displaymath}

We consider the following Lagrangian fibration with isolated singularity, namely the focus-focus singularity:
\begin{align*}
 & \pi_{can} : W \longrightarrow \mathbb{R}^2\\
\pi_{can}(x_1,y_1;x_2,y_2) = &(\pi_1,\pi_2) = (x_1y_1+x_2y_2, x_1y_2-x_2y_1)
\end{align*}

If we take an auxiliary complex structure $J_{au}$ on $W$ with:
\begin{displaymath}
z_1=x_1 - iy_1, \quad z_2 = x_2 + iy_2
\end{displaymath}

Then the symplectic structure becomes: $\omega_{can}= Re(d z_1 \wedge d z_2)$, and consequently the fibration simply becomes:
\begin{align*}
& \pi_{can}: W \longrightarrow \mathbb{C} \\
& \pi_{can}(z_1,z_2) = z_1z_2 \\
with: \ & \pi_1= Re(z_1z_2), \quad \pi_2= Im(z_1z_2)
\end{align*}

As a completely integral system, here $\{ \pi_i \} $ induce independent hamitonian flows on $W$. Under the auxiliary complex coordinates, they can be simply written as: 
\begin{align*}
\phi_1^t (z_1,z_2) &= (e^t \cdot z_1, e^{-t} \cdot z_2)\\
\phi_2^t (z_1,z_2) &= (e^{-it} \cdot z_1, e^{it} \cdot z_2)
\end{align*}

\noindent \textbf{Gluing Procedure.} We denote the space of smooth function on $\mathbb{R}^2$ with vanishing value at 0 by $\mathbb{R}[[x,y]]_0$. It will be our classification space. For any $S \in \mathbb{R}[[x,y]]_0$, we denote its partial derivatives by $S_1$ and $S_2$. Then we take two Poincare surfaces in $W$ as follows:
\begin{align*}
\Pi_1 & = \{ (c,1)\mid |c| < \epsilon\} \\
\Pi_2 & = \{ (e^{S_1(c)-iS_2(c)},c \cdot e^{-S_1(c)+iS_2(c)}) \mid |c| < \epsilon\}
\end{align*}

Here $\{ \Pi_i \}$ are smooth surfaces constructed in such a way that for any $c \ne 0$, $\Pi_2$ is the image of $\Pi_1$ by the joint flow of $(\pi_1, \pi_2)$ at the time $(S_1 - \ln |c|, S_2 + \arg(c))$. 

Consider the $S^1$-orbit of $\Pi_i$ under the $\phi_2$ flow, denoted by $\phi_2(\Pi_i)$. We use the symplectomorphism induced by the joint flow to glue collar neighborhoods of $\phi_2(\Pi_i$) inside each torus $\pi^{-1}(c)$, that is:
\begin{align*}
& \psi: \phi_2(\Pi_1) \longrightarrow \phi_2(\Pi_2) \\
\psi (z_1, z_2) = &(e^{S_1(c)-iS_2(c)} \cdot z_2^{-1}, e^{-S_1(c)+iS_2(c)} \cdot z_1 z_2^2 )
\end{align*}

Notice that the gluing is carried out on each Lagrangian fibre. After the gluing procedure, each regular fibre $\pi_{can}^{-1}(c)$ with $ c\ne 0 $ becomes a compact torus, and the central fibre $\pi_{can}^{-1}(0)$ becomes a pinched torus.

Now let us denote the space after the gluing procedure by $(\widetilde{W}, \omega_{can}, S)$. Then we are ready to state the classification result:

\begin{thm}[\cite{27}] The equivalent classes of focus-focus fibration are classified by the local model:
\begin{displaymath}
\pi_{can}: (\widetilde{W}, \omega_{can}, S) \longrightarrow B=\{ c \mid |c| < \epsilon \}
\end{displaymath}
with the classification space $\{ S \mid  S \in \mathbb{R}[[x,y]]_0 \}$.
\end{thm}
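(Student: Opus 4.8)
The plan is to prove this as a bijective correspondence between equivalence classes of focus-focus fibrations and the classification space $\{S \mid S \in \mathbb{R}[[x,y]]_0\}$, and this splits naturally into a \emph{realization} direction (every $S$ produces a genuine focus-focus fibration) and a \emph{completeness} direction (every focus-focus fibration is equivalent to the model $(\widetilde{W},\omega_{can},S)$ for a unique $S$). Since the equivalence in Definition~1.2 only requires a bundle symplectomorphism over germs of the base near the singular value, the whole problem is semi-global: away from the central fibre everything is standard action-angle theory, and all the content sits in a punctured neighborhood of the pinched torus.

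For the realization direction, I would first check that the Poincar\'e surfaces $\Pi_1$ and $\Pi_2$ are transverse to the joint flow of $(\pi_1,\pi_2)$, so that the $S^1$-orbits $\phi_2(\Pi_i)$ give honest collar neighborhoods inside each fibre $\pi_{can}^{-1}(c)$. The key point is that the gluing map $\psi$ is assembled from the Hamiltonian flows $\phi_1^t,\phi_2^t$ of the two commuting integrals; because such flows preserve $\omega_{can}$ and commute with $\pi_{can}$, the map $\psi$ is automatically a fibrewise symplectomorphism intertwining the projection with itself, so $\pi_{can}$ descends to the glued space $\widetilde{W}$. I would then verify that for $c\neq 0$ the collar identification closes each regular fibre into a compact torus, while at $c=0$ the construction pinches the fibre along the vanishing cycle to produce a single $A_1$ node, confirming that the output is a focus-focus fibration in the sense of Definition~1.1.

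For the completeness direction, start from an arbitrary focus-focus fibration and restrict to a neighborhood of the singular fibre. I would apply the local normal form for non-degenerate (focus-focus) singularities to obtain a symplectomorphism identifying the germ at the $A_1$ point with that of $\pi_{can}(z_1,z_2)=z_1z_2$. The heart of the argument is to propagate this germ around the pinched torus using the $S^1$-action generated by $\pi_2$ and a Moser-type homotopy, extending it to a symplectomorphism on a punctured neighborhood of the whole singular fibre. This extension fails to close up by exactly the monodromy of the fibration, and I would record the discrepancy as the time $(S_1-\ln|c|,\,S_2+\arg c)$ that the joint flow needs to carry $\Pi_1$ to the actual return section; following Arnold's integration over the vanishing cycle this period integral splits as a universal singular part, the terms $\ln|c|$ and $\arg c$ dictated by the focus-focus monodromy, plus a smooth remainder. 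Normalizing the remainder to vanish at $c=0$ defines the invariant $S\in\mathbb{R}[[x,y]]_0$. Injectivity then follows by reversing the construction: if two fibrations share the same $S$, their two gluing prescriptions differ by a Hamiltonian isotopy, which I would realize as the bundle symplectomorphism $F$ covering a base diffeomorphism $g$ required by Definition~1.2.

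The main obstacle is the extension-and-closing-up step in the completeness direction. Linearizing at the node is standard, but controlling the monodromy around the entire pinched torus and proving that the vanishing-cycle period genuinely decomposes into the universal logarithmic/argument singularity plus a smooth germ is delicate; this is where the normal-form theorem and the Moser homotopy carry the real weight, and where one must verify that the resulting $S$ is independent of the auxiliary choices of section and linearization.
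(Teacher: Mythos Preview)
The paper does not prove this theorem at all: it is stated as a result of V\~u Ng\d{o}c and simply cited from \cite{27}. There is no ``paper's own proof'' to compare against; the only comment the paper makes is the Remark immediately following, noting that the classification space $\mathbb{R}[[x,y]]_0$ is path connected so that, by Moser's trick, all local models over the same base are mutually symplectomorphic.

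Your outline is a reasonable summary of the strategy in \cite{27}: realization via the gluing construction, and completeness by first invoking Eliasson's normal form at the focus-focus point, then propagating along the fibre and reading off the return time of the joint flow as $(S_1-\ln|c|,\,S_2+\arg c)$, with the smooth remainder $S$ normalized to vanish at the origin. One inaccuracy: the appeal to ``Arnold's integration over the vanishing cycle'' is out of place here. In V\~u Ng\d{o}c's argument the decomposition of the period into the universal singular part $\ln|c|,\,\arg c$ and a smooth germ is obtained by a direct real-analytic computation with the Hamiltonian flows $\phi_1^t,\phi_2^t$ near the node, not via holomorphic vanishing-cycle theory; in the present paper Arnold's technique only enters later (Section~4) when a compatible holomorphic $2$-form is assumed. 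Apart from that, your sketch tracks the cited proof, but since the paper itself supplies no argument there is nothing further to compare.
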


\noindent \textbf{Remark.} Since the classification space $\mathbb{R}[[x,y]]_0$ is path connected, by the standard Moser's trick, we will get all the local models with the same base are symplectomorphic to each other.\\

\noindent \textbf{Semi-global Invariant.} The classification data $S$ above is also called the \textit{semi-global invariant} of focus-focus fibration. It has the following geometric interpretation in each focus-focus fibration.

Given a focus-focus fibration $f: (M, \omega) \rightarrow B \in \mathbb{C} \cong \mathbb{R}^2$. Let us take $\{\gamma_{e}, \gamma_{m}\}$ as the generators of $H_1(\pi^{-1}(c))$. If we consider the action integral (central charge) along the 1-cycle:
\begin{displaymath}
z_{\gamma_m}(c) = \frac{1}{2\pi} \int_{\gamma_m} \alpha, \quad z_{\gamma_e}(c) = \frac{1}{2\pi} \int_{\gamma_e} \alpha
\end{displaymath}
where $\alpha$ is any 1-form on some neighbourhood of $\pi^{-1}(c)$ in $\widetilde{W}$ such that $d \alpha = \omega$ (which always exists since $\pi^{-1}(c)$ is Lagrangian). Then the semi-global invariant $S$ can be interpreted as a regularised action integral:
\begin{displaymath}
\qquad \qquad \qquad \quad S(c) = 2\pi \cdot [z_{\gamma_m}(c) - z_{\gamma_m}(0)] + Re(c \ln c - c).                  \qquad \qquad \qquad  (*)
\end{displaymath}

Notice that the classification is purely about the Lagrangian fibration structure. The auxiliary complex structure $J_{aux}$ used above is not necessary compatible with the gluing. In fact, we have the following result:

\begin{lem} The auxiliary complex structure $J_{au}$ is compatible with the gluing if and only if the semi-global invariant $S$ is harmonic.
\end{lem}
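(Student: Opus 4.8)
The plan is to first make precise the phrase \emph{compatible with the gluing}: I read it as the requirement that the auxiliary complex structure $J_{au}$ descend to a well-defined complex structure on the glued space $\widetilde{W}$. Since $\widetilde{W}$ is obtained from $W$ by identifying collar neighborhoods of $\phi_2(\Pi_1)$ and $\phi_2(\Pi_2)$ through the transition map $\psi$, this descent holds precisely when $\psi$ is a biholomorphism for $J_{au}$, i.e. when $\psi$ is holomorphic in the coordinates $(z_1,z_2)$. So the whole statement reduces to deciding when the explicit map $\psi(z_1,z_2)=(e^{S_1(c)-iS_2(c)}z_2^{-1},\,e^{-S_1(c)+iS_2(c)}z_1z_2^2)$, with $c=z_1z_2$, is holomorphic.

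Next I would isolate the single non-holomorphic ingredient. Writing $g(c)=S_1(c)-iS_2(c)$, the two components of $\psi$ are $e^{g}z_2^{-1}$ and $e^{-g}z_1z_2^{2}$; the factors $z_2^{-1}$ and $z_1z_2^2$ are manifestly holomorphic, so holomorphicity of $\psi$ is equivalent to holomorphicity of $e^{\pm g}$ as functions of $(z_1,z_2)$. Because $c=z_1z_2$ is holomorphic, the chain rule gives $\partial_{\bar z_2}\big(g(c,\bar c)\big)=\bar z_1\,\partial_{\bar c}g$ and $\partial_{\bar z_1}\big(g(c,\bar c)\big)=\bar z_2\,\partial_{\bar c}g$, hence $\partial_{\bar z_2}\big(e^{g}z_2^{-1}\big)=\bar z_1 z_2^{-1}e^{g}\,\partial_{\bar c}g$ and analogously for the other component and the other antiholomorphic derivative. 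Since $\bar z_1$, $\bar z_2$ and $e^{\pm g}$ are nonvanishing on the locus where the gluing takes place, all of these $\bar\partial$-derivatives vanish identically if and only if $\partial_{\bar c}g\equiv 0$, i.e. $g$ is a holomorphic function of $c$.

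It then remains to translate $\partial_{\bar c}g\equiv 0$ into a condition on $S$, which is the computational heart. Using the Wirtinger operator $\partial_{\bar c}=\tfrac12(\partial_x+i\partial_y)$ with $c=x+iy$ and $S_1=\partial_x S$, $S_2=\partial_y S$, a direct calculation yields $\partial_{\bar c}(S_1-iS_2)=\tfrac12(\partial_{xx}+\partial_{yy})S=\tfrac12\Delta S$, the mixed-partial terms cancelling by equality of mixed partials. Therefore $\psi$ is holomorphic if and only if $\Delta S=0$, i.e. $S$ is harmonic, which is the claim. Equivalently, one checks that harmonicity of $S$ is exactly the Cauchy--Riemann condition for the map $c\mapsto S_1-iS_2$.

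The step I expect to be the main obstacle is the conceptual one rather than the calculation: pinning down the correct meaning of \emph{compatible} and justifying that it is equivalent to holomorphicity of $\psi$. In particular $\psi$ is a priori only prescribed on the hypersurfaces $\phi_2(\Pi_i)$, so I would first argue that its given closed-form expression furnishes a canonical extension to an open neighborhood and that the descent of $J_{au}$ is governed by this extension. I would also record that $\psi$ is, by construction from the Hamiltonian flows, a symplectomorphism of $\omega_{can}=\mathrm{Re}(dz_1\wedge dz_2)$, so that once $\psi$ is holomorphic the real $2$-form $\omega_{can}$ together with $J_{au}$ automatically assemble into a well-defined holomorphic $2$-form on $\widetilde{W}$, matching the ``compatible holomorphic $2$-form'' phrasing of the abstract. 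The residual care is to confirm that the vanishing of $\partial_{\bar c}g$ is a genuine identity in $c$ rather than merely at isolated points, which follows from the nonvanishing of the prefactors $\bar z_j e^{\pm g}$.
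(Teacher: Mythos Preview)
Your proposal is correct and follows essentially the same approach as the paper: both reduce compatibility of $J_{au}$ to the Cauchy--Riemann equations $\partial_{\bar z_j}(e^{S_1-iS_2})=0$ for the gluing map and then simplify this to $S_{11}+S_{22}=0$. Your write-up is considerably more explicit about the interpretation of ``compatible'' and the chain-rule reduction to $\partial_{\bar c}(S_1-iS_2)=\tfrac12\Delta S$, but the underlying argument is identical.
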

\begin{proof} Recall that by definition, we have $S_i(c) = S_i(\frac{z_1z_2+\overline{z_1z_2}}{2}, \frac{z_1z_2-\overline{z_1z_2}}{2i})$. We consider the Cauchy-Riemann equation for the gluing maps, that is:
\begin{displaymath}
\frac{\partial}{\partial \overline{z}_1} (e^{S_1 - i S_2}) = 0, \quad \frac{\partial}{\partial \overline{z}_2} (e^{S_1 - i S_2}) = 0
\end{displaymath}
Such equations can be simplified to: $S_{11} + S_{22} = 0$. Thus we get the proof.

\end{proof}

\section{Action-Angle coordinates}

Now we study the action-angle coordinates of the focus-focus fibration. Let us take a local model $(\widetilde{W}, \omega_{can}, S)$, then denote the punctured disc by $B_0$, and restricted fibration over punctured disc $B_0$ by $\widetilde{W}_0$. We will call $\widetilde{W}_0$ the regular part of the fibration in the later discussion.

Follow the general strategy, we pick a Lagrangian section of the fibration and then use Hamiltonian flows to construct the coordinates.

Recall that from the gluing construction we have the local model given as:
\begin{displaymath}
\widetilde{W} = W / (\Pi_1 \sim \Pi_2)
\end{displaymath}

here the two Poincare surfaces are chosen as:
\begin{align*}
 \Pi_1 &= \{ (c,1)\mid |c| < \epsilon <1 \} \\
 \Pi_2 &= \{ (e^{S_1(c)-iS_2(c)},c \cdot e^{-S_1(c)+iS_2(c)}) \mid |c| < \epsilon < 1\}
\end{align*}

We take a simple Lagrangian section as another initial data:
\begin{displaymath}
 \Gamma(c)=(c,1)
\end{displaymath}

Then we follow the standard procedure in \cite{7} to construct the action-angel coordinates. Use the Hamiltonian flows $\{ \phi_i^t \}$ with $\Gamma(c)$ as the initial level set, we get a parametrization of $\widetilde{W}_0$ as follows:
$$\xymatrix{
B_0 \times \mathbb{R}/L \ar[rr]^{T} \ar[d]^{\pi} && (\widetilde{W}_0,\omega_{can},S) \ar[d]^{\pi_{can}} \\
B_0 \ar[rr]^{id}  && B_0
}$$
\begin{displaymath}
T(c; t_1, t_2) = \phi_1^{-t_1} \circ \phi_2^{-t_2} (c,1) =(c \cdot e^{-t_1 + i \cdot t_2},e^{t_1 - i \cdot t_2} )
\end{displaymath}
Here the period lattice is $L = <(S_1-\ln|c|, S_2 + \arg c),(0, 2\pi)>$ from the above gluing construction.

Notice here $\arg c$ or $\ln c$ is not globally defined function on $B_0$. To clarify the affine coordinates on the base, we need at least two affine charts, with different choice of branches of the $\arg c$ or $\ln c$ function. In our calculation, we will skip this part, and formally use $\arg c$ or $\ln c$ directly if no confusion happens.

\begin{thm} Given the Lagrangian section $\Gamma(c)$ as the initial level set, we have the action-angle coordinates on $\widetilde{W}_0$ as follows:
\begin{align*}
z_{\gamma_m} &= \frac{1}{2\pi} [-\ln|c| \cdot c_1 + \arg c \cdot c_2 + c_1 + S], \quad z_{\gamma_e} = c_2
\\ 
\theta_{\gamma_e} &= \frac{2\pi \cdot t_1}{S_1 - \ln|c|},\qquad \qquad \theta_{\gamma_m} = t_2 -\frac{S_2+ \arg c}{S_1 - \ln |c|} \cdot t_1
\end{align*}
i.e. $\qquad \qquad \qquad T^*(\omega_{can}) =  dz_{\gamma_m} \wedge d\theta_{\gamma_e} + dz_{\gamma_{e}} \wedge d\theta_{\gamma_m}$
\end{thm}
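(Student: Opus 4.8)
The plan is to follow the standard Arnold--Liouville recipe: pull back $\omega_{can}$ through the explicit parametrization $T$, and then check that the proposed action and angle functions put the result into canonical Darboux form. First I would compute $T^*\omega_{can}$ directly. Writing $z_1 = c\,e^{-t_1 + it_2}$ and $z_2 = e^{t_1 - it_2}$ and differentiating, the exponential prefactors in $dz_1$ and $dz_2$ are mutually inverse and cancel in the wedge, while the terms purely in $dt_1, dt_2$ assemble into $(-dt_1 + i\,dt_2)\wedge(dt_1 - i\,dt_2) = 0$. What survives is $dz_1 \wedge dz_2 = dc \wedge (dt_1 - i\,dt_2)$, and taking real parts with $c = c_1 + ic_2$ gives the clean expression $T^*\omega_{can} = dc_1 \wedge dt_1 + dc_2 \wedge dt_2$. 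This is the target we must reproduce.

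Next I would justify the two families of coordinates separately. The action coordinates are periods $z_\gamma = \frac{1}{2\pi}\int_\gamma \alpha$: the electric action over the $\phi_2$-orbit is just the generating Hamiltonian $\pi_2 = c_2$, giving $z_{\gamma_e} = c_2$, while the magnetic action over the vanishing cycle is read off from the regularised action formula $(*)$, which on expanding $\mathrm{Re}(c\ln c - c) = c_1\ln|c| - c_2\arg c - c_1$ yields exactly the stated $z_{\gamma_m}$. Differentiating, and using $\partial_{c_1}\ln|c| = c_1/|c|^2$ together with the analogous identities, the cross terms telescope and I obtain $dz_{\gamma_e} = dc_2$ and $dz_{\gamma_m} = \frac{1}{2\pi}\big[(S_1 - \ln|c|)\,dc_1 + (S_2 + \arg c)\,dc_2\big]$. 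The angle coordinates, on the other hand, are fixed by the requirement that the period lattice $L = \langle (S_1 - \ln|c|,\,S_2 + \arg c),\,(0,2\pi)\rangle$ be straightened to $(2\pi\mathbb{Z})^2$; writing the two generators as the columns of a matrix $P(c)$ and setting $(\theta_{\gamma_m}, \theta_{\gamma_e})^\top = 2\pi\,P^{-1}(t_1,t_2)^\top$ reproduces the stated $\theta_{\gamma_e}, \theta_{\gamma_m}$. Crucially their differentials are not purely in $dt$: since $L$ varies over the base, $d\theta_{\gamma_e}$ and $d\theta_{\gamma_m}$ each carry terms proportional to $dA$ and $dB$, where $A := S_1 - \ln|c|$ and $B := S_2 + \arg c$.

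Finally I would substitute everything into $dz_{\gamma_m}\wedge d\theta_{\gamma_e} + dz_{\gamma_e}\wedge d\theta_{\gamma_m}$ and collect by basis $2$-form. The ``constant'' parts immediately give $dc_1\wedge dt_1 + dc_2\wedge dt_2$, and the $dc_2\wedge dt_1$ contributions from the two summands cancel against each other, as do the $dc_2 \wedge dA$ contributions. The one genuine obstacle is the leftover base term $-\frac{t_1}{A}\big(dc_1\wedge dA + dc_2\wedge dB\big)$, which equals $-\frac{t_1}{A}(A_{c_2} - B_{c_1})\,dc_1\wedge dc_2$; this must vanish identically. I expect this to be the heart of the argument: $A_{c_2} - B_{c_1} = (S_{12} - \partial_{c_2}\ln|c|) - (S_{21} + \partial_{c_1}\arg c)$ vanishes because $S_{12} = S_{21}$ by symmetry of second derivatives and because $\partial_{c_2}\ln|c| = -\partial_{c_1}\arg c$ by the Cauchy--Riemann equations for the holomorphic function $\ln c$. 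With this identity the residual term drops out and the pullback collapses to the canonical form, completing the proof. It is worth noting that, unlike the compatibility Lemma, this step requires only the smoothness of $S$ and not its harmonicity.
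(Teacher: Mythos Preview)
Your argument is correct and follows the same overall architecture as the paper: compute $T^*\omega_{can}=dc_1\wedge dt_1+dc_2\wedge dt_2$, obtain the actions from the period integrals (with $z_{\gamma_m}$ read off from the regularised formula~$(*)$), and build the angles so that the lattice $L$ is taken to $(2\pi\mathbb{Z})^2$.

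The one genuine difference is in the last step. The paper does not expand the wedge products; instead it computes the frequency matrix $\omega_{i,j}=\partial c_i/\partial z_{\gamma_j}$ via implicit differentiation, sets $\theta_{\gamma_e}=\omega_{1,1}t_1+\omega_{2,1}t_2$ and $\theta_{\gamma_m}=\omega_{1,2}t_1+\omega_{2,2}t_2$, and then simply invokes ``the dynamic identity of the integrable system'' from Duistermaat's general theory to conclude $\sum dc_i\wedge dt_i=\sum dz_\gamma\wedge d\theta_\gamma$. Your approach instead verifies this identity by hand and, in doing so, isolates exactly what makes it work here: the residual base term $-\tfrac{t_1}{A}(A_{c_2}-B_{c_1})\,dc_1\wedge dc_2$ vanishes because $S_{12}=S_{21}$ and $\partial_{c_2}\ln|c|=-\partial_{c_1}\arg c$. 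That is precisely the closedness condition hidden inside the general Arnold--Liouville statement, and your remark that only smoothness of $S$ (not harmonicity) is needed is a nice observation the paper leaves implicit. One small slip: in your lattice-straightening formula the output should be ordered $(\theta_{\gamma_e},\theta_{\gamma_m})^\top$, not $(\theta_{\gamma_m},\theta_{\gamma_e})^\top$, to match the stated expressions.
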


\begin{proof} It is a direct calculation to get the identity:
\begin{displaymath}
T^*(\omega_{can}) = dc_1 \wedge dt_1 + dc_2 \wedge dt_2
\end{displaymath}

Moreover, from the gluing procedure in the local model, we can write down the action integrals directly:
\begin{align*}
z_{\gamma_m} = \frac{1}{2\pi} \left( -\ln|c| \cdot c_1 + \arg c \cdot c_2 + c_1 + S \right), \quad z_{\gamma_e} =c_2
\end{align*}

Consequently, we are able to figure out the frequency data. Recall we have the implicit relations:
\begin{displaymath}
c_1 = c_1(z_{\gamma_m}, z_{\gamma_e}), \quad c_2 = c_2(z_{\gamma_m}, z_{\gamma_e})
\end{displaymath}

Compute the implicit derivatives, then we will get the frequency data:
\begin{align*}
\omega_{1,1} = \frac{\partial c_1}{\partial z_{\gamma_m}} = \frac{2\pi}{S_1 - \ln|c|}, \quad \omega_{1,2} = \frac{\partial c_1}{\partial z_{\gamma_e}} = - \frac{S_2+ \arg c}{S_1 - \ln |c|} 
\end{align*}

Similarly,
\begin{align*}
\omega_{2,1} = \frac{\partial c_2}{\partial z_{\gamma_m}} = 0, \quad \omega_{2,2} = \frac{\partial c_2}{\partial z_{\gamma_e}} = 1 
\end{align*}

Thus we get the angle coordinates:
\begin{align*}
\theta_{\gamma_e} &= \omega_{1,1} \cdot t_1 + \omega_{2,1} \cdot t_2 =  \frac{2\pi \cdot t_1}{S_1 - \ln|c|} \\
\theta_{\gamma_m} &= \omega_{1,2} \cdot t_1 + \omega_{2,2} \cdot t_2 =  t_2 -\frac{S_2+ \arg c}{S_1 - \ln |c|} \cdot t_1
\end{align*}

Finally follow the dynamic identity of the integrable system, we arrive at:
\begin{displaymath}
dc_1 \wedge dt_1 + dc_2 \wedge dt_2 =  dz_{\gamma_m} \wedge d\theta_{\gamma_e} + dz_{\gamma_{e}} \wedge d\theta_{\gamma_m}
\end{displaymath}

That finishes the proof of identities in the lemma.
\end{proof}

Notice that$( dz_{\gamma_m} \wedge d\theta_{\gamma_e} + dz_{\gamma_{e}} \wedge d\theta_{\gamma_m} )$ is invariant under the gluing determined by the period lattice $L$, and also the monodromy transformation of $z_{\gamma_m}$ and $\theta_{\gamma_e}$. Thus the action-angle coordinates above is well defined. Moreover, under the angle coordinates $\{\theta_{\gamma_m}, \theta_{\gamma_e}\}$, the period lattice becomes the standard one: $<(2\pi,0), (0,2\pi)>$.\\

\noindent \textbf{Remark.} Notice that generally the action-angle coordinates is not unique, different choice of Lagrangian section as the zero level set of the Hamiltonian flow may give us different angle coordinates.

\section{Examples}

There are lots of interesting examples of focus-focus fibration studied in different fields. However due to the complicity of elliptic integral generally the action-angle coordinates and thus the semi-global invariant is not easy to calculate. We discuss several cases here.\\

\textit{Example.1.} Spherical pendulum is a famous example equipped with the focus-focus fibration structure. The action integrals and also the semi-global invariant is recently calculated by Dullin in \cite{8}.\\

\textit{Example.2.} The Ooguri-Vafa space $M_{O.V.}$ is also an important case of focus-focus fibration. Geometrically it is a $S^1$ bundle over $\mathbb{R}^2 \times S ^1$ with first chern class $\pm 1$ as constructed in \cite{28}. Follow the Gibbons-Hawking ansatz \cite{14}, we choose the following sympletic form on $M_{O.V.}$ (which is a rescaling by $-2\pi$ of the standard one):
\begin{displaymath}
\omega_0 = -2\pi \cdot [ dc_2 \wedge (\frac{d\theta_m}{2\pi} + A_0) + V_0 d(\frac{\theta_e}{2\pi R}) \wedge dc_1]
\end{displaymath}

Here the Lagrangian fibration is given by:
\begin{align*}
f: & M_{O.V.} \longrightarrow \mathbb{R}^2 \\
f(c_1,c_2 &; \theta_e, \theta_m) = ( c_1,  c_2)
\end{align*}

Recall that the standard Ooguri-Vafa potential \cite{28} is given as:
\begin{displaymath}
V_{O.V.} = \frac{R}{4\pi} \cdot \sum_{n \in \mathbb{Z}}[\frac{1}{\sqrt{ R^2 |c|^2 + (\frac{\theta_e}{2\pi} + n)^2  }}  - \kappa(n) ]
\end{displaymath}
with the regularization terms: $\kappa(0) =0$, and $\kappa(n) = \frac{1}{|n|}$ if $n \ne 0$ for the convergence consideration.

Here we make soem generalization and choose the following potential functions for the symplectic structure $\omega_0$:
\begin{displaymath}
V_0 = \frac{R}{4\pi} \cdot \left( \sum_{n \in \mathbb{Z}}[\frac{1}{\sqrt{ R^2|c|^2 + (\frac{\theta_e}{2\pi} + n)^2  }}  - \kappa(n) ] + 2  S_1(c_1,c_2)\right)
\end{displaymath}
As above, here $S$ is any smooth harmonic function with $S(0)=0$, and $S_i = \frac{\partial}{\partial c_i}S$. It satisfies the following positivity condition along the $\theta_e$-axis:
\begin{displaymath}
S_1(0) > -\min_{\theta_e \in [0,2\pi] } \frac{1}{2}  \sum_{n \in \mathbb{Z}}[\frac{1}{| \frac{\theta_e}{2\pi} + n|  }  - \kappa(n) ]   \qquad \qquad \qquad \qquad  (**)
\end{displaymath}
Then $V$ is still a local positive harmonic function with one singularity at origin. The connection 1-form is given from the standard relation: $dA_0 = * dV_0$. 

\begin{prp}
The action angle coordinates on $(M_{O.V.}, \omega_0)$ with respect to the above Lagrangian fibration can be given by:
\begin{displaymath}
\omega_0 = d z_m \wedge d\tilde{\theta}_e + d z_e \wedge d\tilde{\theta}_m
\end{displaymath}
with:
\begin{align*}
z_m &=  \frac{1}{2\pi} \cdot \left[ Re(c -c \ln c)+ S \right] , \qquad \quad z_e = c_2 \\
\tilde{\theta}_e &= \theta_e + \frac{2\pi \cdot R \sigma}{S_1 - \ln|c|}, \quad \quad \quad \tilde{\theta}_m = -\theta_m - \frac{S_2 + \arg c}{S_1 - \ln|c|} \cdot R \sigma 
\end{align*}
here when $c\ne 0$, the angle correction term is given by:
\begin{displaymath}
\sigma = \int V_0^{inst} d\theta_e = \frac{1}{2\pi}  \sum_{n \ne 0} \frac{1}{i\cdot n} e^{i\cdot n\theta_{e}} K_0(2\pi |nc|) + C
\end{displaymath}
\end{prp}

\begin{proof} It mainly comes from the calculation of the action integrals. Notice that $\omega^{sf}$ and $\omega$ share the same action integrals \cite{4} \cite{12}. It is a special property comes from the Gibbons-Hawking ansatz construction. 

Recall that from Fourier expansion, the semi-flat or zero mode part of the potential is simply given as:
\begin{align*}
V_0^{sf} = - \frac{R}{4\pi} (\ln c + \ln \overline{c} -2  S_1)
\end{align*}

Consequently, we have the semi-flat or zero mode part of the connection 1-form:
\begin{align*}
A_0^{sf} = \frac{i}{8\pi^2} (\ln c - \ln \overline{c} +  2i \cdot S_2) d \theta_e
\end{align*}

Then from direct calculation, we have the action-angle coordinates for the semi-flat part:
\begin{align*}
\omega_0^{sf} &= -2\pi \cdot [ dc_2 \wedge (\frac{d\theta_m}{2\pi} + A^{sf}_0) + V^{sf}_0 d(\frac{\theta_e}{2\pi R}) \wedge dc_1] \\
&= d\left[  \frac{1}{2\pi} \cdot Re(c -c \ln c) + \frac{1}{2\pi} \cdot S \right] \wedge d\theta_e + dc_2 \wedge d(-\theta_m)
\end{align*}

Now let us take the instanton part into account. Recall that the instanton part $\omega_0^{inst} = \omega_0 - \omega_0^{sf}$ is similar determined by by the instanton part of the potential and connection 1-form:
\begin{align*}
V^{inst}_0 &= \frac{R}{2\pi}  \sum_{n \ne 0} e^{i\cdot n\theta_{e}} K_0(2\pi |nc|) \\
A^{inst}_0 &= -\frac{R}{4\pi} \left( \frac{d c}{c} - \frac{d \overline{c}}{\overline{c}} \right) \sum_{n \ne 0} sign(n) \cdot e^{i\cdot n\theta_{e}}|c| K_1(2\pi |nc|)\\
\omega^{inst}_0 &= -2\pi \cdot [ dc_2 \wedge  A^{inst}_0 + V^{inst}_0 d(\frac{\theta_e}{2\pi R}) \wedge dc_1]
\end{align*} 

From property of the Bessel function, we get the following action-angle coordinates for the instanton part:
\begin{align*}
\omega_0^{inst} = d\left[  \frac{1}{2\pi} \cdot Re(c -c \ln c) + \frac{1}{2\pi} \cdot S \right] \wedge d\left( \frac{2\pi \cdot R \sigma}{S_1 - \ln|c|} \right) + dc_2 \wedge  d\left( - \frac{S_2 + \arg c}{S_1 - \ln|c|} \cdot R \sigma \right)
\end{align*}

Add $\omega_0^{sf}$ and $\omega_0^{inst}$ together, we get the whole action-angle coordinates for $\omega_0$.
\end{proof}

\noindent \textbf{Remark.} Notice that $\omega^{sf}_0$ and $\omega_0$ share the same action coordinates but different angle coordinates. In the Ooguri-Vafa space, $\theta_e$ and $\theta_m$ are global coordinates away from the singular point, while the angle coordinates $\tilde{\theta}_e$ and $\tilde{\theta}_m$ generally are only defined on the regular part of the fibration or away from the singular fibre. The above formula in the Ooguri-Vafa case indicates a way to deform the angle coordinates to make them extendable over the singular fibre, which might be helpful in other geometry cases. 

In addition, the formula also shows that in the Ooguri-Vafa case, the instanton correction only contributes to the deformation of the angle coordinates, which comes in the form of an infinite series labeled by wrapping number $n$ as explained in \cite{28}.\\

\textit{Example.3.} Another example of focus-focus fibration comes from the famous special Lagrangian fibration model \cite{16} used in the study of mirror symmetry and wall crossing phenomena:
\begin{displaymath}
\pi: \mathbb{C}^2-\{z_1z_2+1=0\} \longrightarrow \mathbb{R}^2
\end{displaymath}
\begin{displaymath}
\pi(z_1,z_2) = (\ln|1+z_1z_2|, \frac{|z_1|^2-|z_2|^2}{2})
\end{displaymath}
It will be quite interesting to find its action integral and moreover the semi-global invariants.

\section{holomorphic 2-form}

Motivated by study of the hyperk\"{a}hler metric on (local) elliptic K3, here we study holomorphic 2-form or holomorphic symplectic structure on the local model of focus-focus fibration.

Recall that on elliptic K3 surface $X$ with the hyperk\"{a}hler data $(\omega, J, \Omega)$, follow the standard hyperk\"{a}hler rotation, we can always transfer the elliptic fibration structure into a Lagrangian fibration structure with respect to the symplectic form $Re(\Omega)$ or $Im(\Omega)$. Generally we will take local elliptic K3, denoted by $M$, as a total neighborhood of an $A_1$ singular fibre in such fibration. Then equipped with the restricted geometry data, $M$ get a focus-focus fibration structure with respect to the symplectic structure  $Re(\Omega|_{M})$ or $Im(\Omega|_{M})$. 

From classification result \cite{27}, we know such Lagrangian fibration is also equivalent to certain local model of focus-focus fibration. Therefore we would like to study similar geometric structure directly on the local model of focus-focus fibration, which can be viewed as the pull back of the geometric structure from local elliptic K3 through the corresponding bundle symplectomorphism. 

Based on Andreotti's observation \cite{19} about holomorphic 2-form on K3 surface. We make the following definition on the local model:

\begin{defn}
We call a 2-form $\Omega$ on the local model $(\widetilde{W}, \omega_{can}, S)$ is a compatible holomorphic 2-form, if it has the following specialty properties:

1) $\omega_{can} = Re(\Omega)$;

2) $d\Omega=0, \quad \Omega \wedge \Omega =0,  \quad  \Omega \wedge \overline{\Omega} >0 $;

3) the fibration $\pi_{can}: \widetilde{W} \rightarrow B $ becomes an elliptic fibration with respect to the complex structure determined by 2).
\end{defn}
Notice that from Andreotti's argument, in fact any 2-form $\Omega$ with property 2) will determine a unique complex structure $J_0$ such that $\Omega$ becomes a holomorphic 2-form with respect to $J_0$. Thus no ambiguity would happen in our definition. \\

Now let us consider the condition of existence of such 2-form on the local model. Suppose our local model $(\widetilde{W}, \omega_{can}, S)$ admits such a homomorphic 2-form $\Omega$, and the fibration $\pi_{can}: (\widetilde{W},J, \Omega) \rightarrow B$ becomes an elliptic fibration, then similarly as before, we have the action integral (central charge) along the 1-cycle:
\begin{displaymath}
Z_{\gamma_m}(c) = \frac{1}{2\pi} \int_{\gamma_m} \kappa, \quad Z_{\gamma_e}(c) = \frac{1}{2\pi} \int_{\gamma_e} \kappa
\end{displaymath}
where $\kappa$ is any 1-form on some neighbourhood of $\pi^{-1}(c)$ in $\widetilde{W}$ such that $d \kappa = \Omega$ (which always exists since $\pi^{-1}(c)$ is Lagrangian). By construction, we have the simple relation:
\begin{displaymath}
Re(Z_{\gamma_e}) = z_{\gamma_e}, \quad Re(Z_{\gamma_m}) = z_{\gamma_m}
\end{displaymath}

\noindent\textbf{Observations.} The first observation comes from the result of\textit{ integral over vanishing cycles} in singularity theory \cite{1}. Since the local model now is equipped with the structure of elliptic fibration with $A_1$ singularity, we have the property:
\begin{lem} The action integrals $Z_{\gamma_e}, Z_{\gamma_m}$ are holomorphic functions on $B_0$. For the integral over vanishing cycle, we have the local expression:
\begin{displaymath}
Z_{\gamma_m} = f(c)+g(c)\ln(c), \quad \forall c \in B_0
\end{displaymath}
here $f,g$ are local holomorphic functions defined near $c$. Consequently, the action integrals $z_{\gamma_e}$ and $z_{\gamma_m}$ are always harmonic functions on $B_0$.
\end{lem}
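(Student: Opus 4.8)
The plan is to establish the statement in two stages: first the holomorphicity and the logarithmic local expression for $Z_{\gamma_m}$, and then the harmonicity of $z_{\gamma_e}$ and $z_{\gamma_m}$ as a consequence. The conceptual engine is the theory of integrals over vanishing cycles for a holomorphic function with an isolated critical point, applied to the elliptic fibration $\pi_{can}\colon (\widetilde{W},J,\Omega)\to B$ with its single $A_1$ (nodal) singular fibre over $c=0$. The key input is that, because $\Omega$ is a holomorphic $2$-form and $d\kappa=\Omega$, the period $Z_{\gamma}(c)=\frac{1}{2\pi}\int_{\gamma}\kappa$ is, up to a choice of primitive, the integral of a holomorphic form over a cycle varying holomorphically in the fibre. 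I would make this precise via a holomorphic Gauss–Manin/Picard–Lefschetz picture: the cohomology classes $[\kappa|_{\pi^{-1}(c)}]$ fit into a holomorphic vector bundle over $B_0$ (the local system $R^1\pi_* \mathbb{C}$ tensored with $\mathcal{O}_{B_0}$), and pairing against a flat section of $H_1$ produces a holomorphic function of $c$. Concretely, I would differentiate under the integral sign: since $\Omega=d\kappa$ is holomorphic and $\partial_{\bar c}\Omega=0$, and since the cycle $\gamma_e$ or $\gamma_m$ can be chosen to vary holomorphically (or be locally constant) over a simply connected patch of $B_0$, one gets $\partial_{\bar c}Z_\gamma=0$, establishing holomorphicity on the punctured disc.

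Second, for the vanishing cycle $\gamma_m$ I would invoke the classical Picard–Lefschetz formula: transporting around the puncture $c=0$ produces a monodromy on $H_1(\pi^{-1}(c))$ which, for an $A_1$ singularity in complex dimension one (a node), is a Dehn twist sending $\gamma_m\mapsto \gamma_m$ and $\gamma_e\mapsto \gamma_e$ with $\gamma_m$ the vanishing cycle and the transvection acting as $\gamma\mapsto \gamma+\langle\gamma,\gamma_m\rangle\gamma_m$. The upshot is that the period over the vanishing cycle $\gamma_e$ extends holomorphically across $0$ (it is single-valued, indeed $Z_{\gamma_e}=c_2$-type behaviour up to the complexification), while the period over the transverse cycle $\gamma_m$ acquires an additive shift under monodromy proportional to the vanishing-cycle period. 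This monodromy behaviour is exactly what forces the local shape
\begin{displaymath}
Z_{\gamma_m}=f(c)+g(c)\ln(c),
\end{displaymath}
where $f,g$ are holomorphic near $0$ and $g$ captures the monodromy defect: subtracting $g(c)\ln c$ from $Z_{\gamma_m}$ kills the $2\pi i\, g$ jump, leaving a single-valued holomorphic function $f$. I would identify $g$ with (a multiple of) the vanishing-cycle period via the residue/intersection pairing, matching the $\arg c$ and $\ln|c|$ terms already visible in the action-coordinate formula of the previous section's Theorem.

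For the final clause, harmonicity of $z_{\gamma_e}$ and $z_{\gamma_m}$ is then immediate: by the stated relation $\operatorname{Re}(Z_{\gamma_e})=z_{\gamma_e}$ and $\operatorname{Re}(Z_{\gamma_m})=z_{\gamma_m}$, these real actions are real parts of holomorphic (on $B_0$, hence locally single-valued) functions, and the real part of any holomorphic function is harmonic. One must check that the logarithmic branch ambiguity does not spoil this: $\ln c=\ln|c|+i\arg c$ has harmonic real part $\ln|c|$ away from $0$, and $g(c)\ln c$ still has harmonic real part on $B_0$ since $g$ is holomorphic and $\ln c$ is harmonic on the punctured disc; thus even the multivalued piece contributes a harmonic real part, and $z_{\gamma_m}$ is harmonic on all of $B_0$.

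The main obstacle I anticipate is the second stage: rigorously controlling the multivaluedness and pinning down the precise logarithmic form $f(c)+g(c)\ln c$ rather than merely asserting holomorphicity on a simply connected patch. The delicate point is justifying that the singular behaviour at $c=0$ is exactly logarithmic (no worse singularity, and genuinely logarithmic rather than a pole or branch of fractional type), which is precisely where the $A_1$/nodal hypothesis and the Picard–Lefschetz monodromy computation do the essential work; differentiation under the integral sign and the real-part-is-harmonic step are comparatively routine once the Gauss–Manin framework and the holomorphicity of $\Omega$ are in place.
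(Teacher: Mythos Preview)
Your outline is essentially correct and in fact supplies what the paper does not: the paper offers no proof of this lemma at all, simply recording it as an ``observation'' from the theory of integrals over vanishing cycles and citing Arnold--Varchenko--Gusein-Zade, \textit{Singularities of differentiable maps}, Chapter~10. Your Gauss--Manin/Picard--Lefschetz argument is precisely the content of that reference, so there is no methodological divergence to discuss; you have unpacked the citation.

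Two small points to tidy. First, your labeling of the vanishing cycle is inconsistent: you write ``with $\gamma_m$ the vanishing cycle'' but then correctly say ``the period over the vanishing cycle $\gamma_e$ extends holomorphically across $0$.'' In the paper's conventions (see the monodromy $Z_{\gamma_m}\mapsto Z_{\gamma_m}+Z_{\gamma_e}$ in Section~6 and the explicit $z_{\gamma_e}=c_2$), it is $\gamma_e$ that vanishes; the paper's phrase ``integral over vanishing cycle'' in the lemma heading is itself loose. Second, in your harmonicity step the sentence ``$g(c)\ln c$ still has harmonic real part on $B_0$ since $g$ is holomorphic and $\ln c$ is harmonic'' is not a valid inference as stated (products of harmonic functions are not harmonic). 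The correct justification, which you have implicitly, is that on any simply connected patch of $B_0$ the function $g(c)\ln c$ is holomorphic, hence its real part is harmonic there, and harmonicity is local; the multivaluedness only affects the choice of branch, not the Laplacian.
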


Secondly, we have the simple but important identity:
\begin{displaymath}
c-c \cdot \ln c = (-\ln|c| \cdot c_1 + \arg c \cdot c_2 + c_1) + i \cdot (-\ln|c| \cdot c_2 - \arg c \cdot c_1 + c_2)
\end{displaymath}

Thus, we arrive at the following statement about the homomorphic 2-form on the local model:

\begin{cor} If a local model $(\widetilde{W}, \omega_{can}, S)$ admits a compatible holomorphic 2-form $\Omega$ as defined above, then:

 1) the semi-global invariant $S$ is harmonic,

 2) the action integral has the form:
 \begin{displaymath}
 Z_{\gamma_m} = \frac{1}{2\pi} \cdot [c-c \cdot \ln c + (S+ i \cdot \widetilde{S})], \quad Z_{\gamma_e}=c_2-i \cdot c_1=-i \cdot c
 \end{displaymath}
 here $\widetilde{S}$ is conjugate harmonic function of $S$,

 3) the holomorphic 2-form $\Omega$ is determined as:
\begin{displaymath}
\Omega = d Z_{\gamma_m} \wedge d\theta_{\gamma_e}+d Z_{\gamma_e} \wedge d\theta_{\gamma_m}+ i\cdot h(c_1,c_2) dc_1 \wedge dc_2
\end{displaymath}
with the positive definite condition: $S_1 > \ln |c|$. Here $h(c_1,c_2)$ is a smooth function.
\end{cor}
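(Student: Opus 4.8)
The plan is to extract parts (1) and (2) directly from the action--angle formulas of Theorem~2.2 and the holomorphicity of the central charges in Lemma~4.1, and then to pin down the shape of $\Omega$ in part (3) by exploiting that $\pi_{can}$ is holomorphic for the complex structure $J_0$ that $\Omega$ determines. For (1), Theorem~2.2 gives $2\pi z_{\gamma_m} = (-\ln|c|\,c_1 + \arg c\,c_2 + c_1) + S$, and the displayed identity preceding the corollary rewrites the bracket as $Re(c - c\ln c)$. Since $c - c\ln c$ is holomorphic on $B_0$ its real part is harmonic, while $z_{\gamma_m}$ is harmonic by Lemma~4.1; hence $S = 2\pi z_{\gamma_m} - Re(c-c\ln c)$ is a difference of harmonic functions, so $S$ is harmonic. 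For (2), Lemma~4.1 makes $Z_{\gamma_e},Z_{\gamma_m}$ holomorphic in $c$, with $Re\,Z_{\gamma_e}=z_{\gamma_e}=c_2$ and $Re\,Z_{\gamma_m}=z_{\gamma_m}$. A holomorphic function is fixed by its real part up to an imaginary constant, so $Z_{\gamma_e}$ is the holomorphic function with real part $Im(c)$, i.e. $Z_{\gamma_e}=c_2-ic_1=-ic$ (the constant fixed by $Z_{\gamma_e}(0)=0$ for the vanishing cycle), and, letting $\widetilde S$ be the harmonic conjugate of $S$ guaranteed by (1), the holomorphic function with real part $\tfrac{1}{2\pi}(Re(c-c\ln c)+S)$ is $Z_{\gamma_m}=\tfrac{1}{2\pi}[(c-c\ln c)+(S+i\widetilde S)]$, the leftover imaginary constant being absorbed into $\widetilde S$.

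For (3) I would set $\Omega_0 = dZ_{\gamma_m}\wedge d\theta_{\gamma_e} + dZ_{\gamma_e}\wedge d\theta_{\gamma_m}$ and show that $\Omega-\Omega_0 = ih\,dc_1\wedge dc_2$. The key step uses property~3: since $\pi_{can}$ is holomorphic onto $(B_0,c)$ for $J_0$, the $1$-form $dc$ is of type $(1,0)$, so $\Omega\wedge dc$ is a $(3,0)$-form on the complex surface $\widetilde W_0$ and must vanish. Expanding $\Omega$ in the complexified coframe $(dc, d\bar c, d\theta_{\gamma_e}, d\theta_{\gamma_m})$ and imposing $\Omega\wedge dc=0$ simultaneously kills the $d\theta_{\gamma_e}\wedge d\theta_{\gamma_m}$ component (consistent with the fibres being holomorphic) and every $d\bar c$ occurring in the base--fibre part. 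Thus $\Omega = A\,dc\wedge d\theta_{\gamma_e} + C\,dc\wedge d\theta_{\gamma_m} + P\,dc\wedge d\bar c$ with base--fibre coefficients of pure type $(1,0)$.

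Next I would match real parts. Property~1 together with $T^*\omega_{can}=dz_{\gamma_m}\wedge d\theta_{\gamma_e}+dz_{\gamma_e}\wedge d\theta_{\gamma_m}$ yields $Re(A\,dc)=dz_{\gamma_m}=Re(dZ_{\gamma_m})$ and $Re(C\,dc)=dz_{\gamma_e}=Re(dZ_{\gamma_e})$. Since two $(1,0)$-forms (both complex multiples of $dc$) with equal real parts must coincide, $A\,dc=dZ_{\gamma_m}$ and $C\,dc=dZ_{\gamma_e}$, so the base--fibre part of $\Omega$ is exactly $\Omega_0$. The residual term $P\,dc\wedge d\bar c$ has vanishing real part because $\omega_{can}$ carries no base--base component; writing $dc\wedge d\bar c=-2i\,dc_1\wedge dc_2$ this forces $P$ to be real, so the term equals $ih\,dc_1\wedge dc_2$. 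Finally $d\Omega=0$ together with $d\Omega_0=0$ gives $dh\wedge dc_1\wedge dc_2=0$, i.e. $h=h(c_1,c_2)$, producing the stated expression for $\Omega$.

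For the positivity clause, a direct wedge computation shows the $ih\,dc_1\wedge dc_2$ term drops out of $\Omega\wedge\overline\Omega$, leaving $\Omega\wedge\overline\Omega=\Omega_0\wedge\overline\Omega_0=-4\,Re(Z'_{\gamma_m})\,dc_1\wedge dc_2\wedge d\theta_{\gamma_e}\wedge d\theta_{\gamma_m}$; by (2) one has $Re(Z'_{\gamma_m})=\tfrac{1}{2\pi}(S_1-\ln|c|)$, equivalently the modular parameter $\tau=Z'_{\gamma_m}/Z'_{\gamma_e}=iZ'_{\gamma_m}$ of the elliptic fibre satisfies $Im\,\tau=\tfrac{1}{2\pi}(S_1-\ln|c|)$, so positive-definiteness ($Im\,\tau>0$) is exactly $S_1>\ln|c|$. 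The part I expect to be the main obstacle is the identification of the base--fibre component of $\Omega$ with $\Omega_0$, that is, the assertion that the angle coordinates built from $\omega_{can}$ in Theorem~2.2 are simultaneously the correct angles for the holomorphic form $\Omega$; the type argument $\Omega\wedge dc=0$ resolves it cleanly, and the only point requiring care is justifying that the complex structure $J_0$ induces on $B_0$ precisely the standard $c$-structure (which is already implicit in Lemma~4.1, where the central charges are holomorphic in $c$), so that $dc$ is genuinely of type $(1,0)$.
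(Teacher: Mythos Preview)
Your treatment of parts (1), (2) and the positivity condition coincides with the paper's: both pull the conclusions directly from Theorem~2.1, the identity $c-c\ln c = (-\ln|c|\,c_1+\arg c\,c_2+c_1)+i(\cdots)$, and Lemma~4.2, and the computation of $\Omega\wedge\overline\Omega$ is the same up to the swap $d\theta_{\gamma_e}\wedge d\theta_{\gamma_m}\leftrightarrow d\theta_{\gamma_m}\wedge d\theta_{\gamma_e}$.

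For part (3) you take a genuinely different route. The paper does not \emph{derive} the shape of $\Omega$; it simply writes down the displayed expression and then \emph{verifies} the defining conditions of a compatible holomorphic $2$-form: closedness is immediate, $\Omega\wedge\Omega=0$ is reduced to $dZ_{\gamma_m}\wedge dZ_{\gamma_e}=0$ via the Cauchy--Riemann equations for $S+i\widetilde S$, and $\Omega\wedge\overline\Omega=\tfrac{2}{\pi}(S_1-\ln|c|)\,dc_1\wedge dc_2\wedge d\theta_{\gamma_m}\wedge d\theta_{\gamma_e}$ gives $S_1>\ln|c|$. The presence of the extra $ih\,dc_1\wedge dc_2$ is justified only by the remark that $\theta_{\gamma_e},\theta_{\gamma_m}$ need not be angle coordinates for $Im(\Omega)$. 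Your argument instead starts from an arbitrary $\Omega$ satisfying Definition~4.1 and uses the type constraint $\Omega\wedge dc=0$ (a $(3,0)$-form on a surface) to kill the $d\bar c$ and fibrewise components, then matches real parts against $\omega_{can}$ to identify the base--fibre piece with $dZ_{\gamma_m}\wedge d\theta_{\gamma_e}+dZ_{\gamma_e}\wedge d\theta_{\gamma_m}$, leaving only the purely imaginary base--base residue $ih\,dc_1\wedge dc_2$. This buys you an actual determination of $\Omega$ rather than a check, and explains structurally where the $h$-term comes from; the paper's approach is shorter but leaves the ``determined as'' in the statement unproved. The one delicate point you flag---that the complex structure $J_0$ induces the standard $c$-structure on $B_0$, so that $dc$ is of type $(1,0)$---is handled at exactly the level the paper does, by appeal to Lemma~4.2.
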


\begin{proof} The first two properties directly comes from the observations made above. We just check the three characteristic properties of two form $\Omega$ here.

1) The closeness of $\Omega$ is given directly.

2) Notice that we have $(S+i \cdot \tilde{S})$ as a homomorphic function, by its Cauchy-Riemann equation, we have the identity:
\begin{displaymath}
d Z_{\gamma_m} \wedge d Z_{\gamma_e} =0
\end{displaymath}
which implies the second property $\Omega \wedge \Omega =0$ through a short calculation.

3) After arrangement, we get the expression:
\begin{align*}
\Omega \wedge \overline{\Omega} 
& = ( d\overline{Z}_{\gamma_e} \wedge d Z_{\gamma_m} + d Z_{\gamma_e} \wedge d \overline{Z}_{\gamma_m} ) \wedge d\theta_{\gamma_e} \wedge d\theta_{\gamma_m} \\
& = \frac{2}{\pi} \cdot (S_1 - \ln |c|) dc_1 \wedge dc_2 \wedge d\theta_{\gamma_m} \wedge d\theta_{\gamma_e}
\end{align*}
therefore we arrive at the positivity condition: $S_1 > \ln |c|$.  The $h$ term appears here since $\theta_{\gamma_e}, \theta_{\gamma_m}$ are not necessary angle coordinates for $Im(\Omega)$ here.

\end{proof}

\noindent \textbf{Remark.} Here the addition term $h(c_1,c_2)$ appears since we just know the fibration is Lagrangian with respect to $Im(\Omega)$. If we further have the section $\Gamma(c)$ is also Lagrangian with respect to $Im(\Omega)$, then the $h(c_1,c_2)$ term will vanish.\\

Generally it is not easy to write down the explicit expression of the complex structure $J_0$ determined by $\Omega$. However from the observation in Lemma 1.4, we have the following result in the special case:
\begin{cor} If a local model $(\widetilde{W}, \omega_{can}, S)$ admits a compatible holomorphic 2-form with $h=0$, then we have the complex structure $J_0$ and the holomorphic 2-form $\Omega$ on $\widetilde{W}$ simply given by:
\begin{displaymath}
J_0= J_{au}, \qquad \Omega = d z_1 \wedge d z_2 = (dc_1 + i \cdot dc_2) \wedge (dt_1 -i \cdot dt_2)
\end{displaymath}

\end{cor}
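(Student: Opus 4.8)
The plan is to exhibit $dz_1\wedge dz_2$ as a compatible holomorphic 2-form whose $h$-term vanishes, and then to read off the conclusion from the uniqueness already packaged in the preceding corollary. Since the hypothesis supplies a compatible holomorphic 2-form, the semi-global invariant $S$ is harmonic (part 1) of the preceding corollary, so by the lemma relating harmonicity of $S$ to compatibility of $J_{au}$ (Lemma 1.2) the auxiliary complex structure descends to the glued model $\widetilde W$ and $dz_1\wedge dz_2$ is a globally defined closed 2-form there. Taking $dz_1\wedge dz_2$ as candidate, I would check it satisfies all three defining properties with $h=0$; once this is established, the preceding corollary forces both $\Omega$ and $dz_1\wedge dz_2$ to equal the \emph{same} explicit expression $dZ_{\gamma_m}\wedge d\theta_{\gamma_e}+dZ_{\gamma_e}\wedge d\theta_{\gamma_m}$, whence they coincide.

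First I would record the coordinate computation in the action-angle parametrization $T(c;t_1,t_2)=(c\,e^{-t_1+it_2},e^{t_1-it_2})$ of Theorem 2.1. Here $z_2=e^{t_1-it_2}$ and $z_1=c\,z_2^{-1}$, so $dz_2=z_2(dt_1-i\,dt_2)$ and a one-line wedge gives
\[
dz_1\wedge dz_2 = dc\wedge(dt_1-i\,dt_2) = (dc_1+i\,dc_2)\wedge(dt_1-i\,dt_2),
\]
using $c=c_1+ic_2$; this already proves the displayed formula in the statement. Then I would verify the defining properties: closedness and $\Omega\wedge\Omega=0$ are immediate, the identity $Re(dz_1\wedge dz_2)=\omega_{can}$ is exactly the defining relation of $J_{au}$ from the opening of the paper, the positivity $\Omega\wedge\overline{\Omega}>0$ holds under the same inequality $S_1>\ln|c|$ obtained in the preceding corollary, and ellipticity follows because $\pi_{can}=z_1z_2$ is $J_{au}$-holomorphic with compact (torus) fibres. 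Thus $dz_1\wedge dz_2$ is a compatible holomorphic 2-form whose induced complex structure is $J_{au}$.

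Finally I would compute its $h$-term. Taking imaginary parts of the displayed identity gives $Im(dz_1\wedge dz_2)=dc_2\wedge dt_1-dc_1\wedge dt_2$, which restricts to zero on the section $\Gamma(c)=(c,1)=\{t_1=t_2=0\}$; hence $\Gamma$ is Lagrangian for $Im(dz_1\wedge dz_2)$ and, by the Remark following the preceding corollary, $h=0$ for this form. Since both $\Omega$ and $dz_1\wedge dz_2$ are then compatible holomorphic 2-forms with $h=0$, the preceding corollary writes each as $dZ_{\gamma_m}\wedge d\theta_{\gamma_e}+dZ_{\gamma_e}\wedge d\theta_{\gamma_m}$ — the additive-constant ambiguity in the harmonic conjugate $\widetilde S$ not affecting the differentials — so $\Omega=dz_1\wedge dz_2$, and Andreotti's uniqueness of the complex structure attached to a form with property 2) yields $J_0=J_{au}$. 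The one subtlety I anticipate is the bookkeeping in the last step: confirming that the Lagrangian-section criterion of the Remark applies verbatim to $Im(dz_1\wedge dz_2)$, given that the angle coordinates $\theta_{\gamma_e},\theta_{\gamma_m}$ are themselves $c$-dependent, so that the $dc_1\wedge dc_2$ component really does drop out. As a cross-check I would, if needed, match the imaginary parts of the two explicit expressions directly rather than rely on the section criterion.
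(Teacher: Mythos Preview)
Your proposal is correct and considerably more detailed than what the paper offers: the paper gives no proof for this corollary beyond the one-line lead-in ``from the observation in Lemma~1.4, we have the following result in the special case,'' so there is essentially nothing to compare against. Your argument --- using Lemma~1.4 to see that $J_{au}$ and $dz_1\wedge dz_2$ descend to $\widetilde{W}$, then verifying the defining properties of a compatible holomorphic $2$-form, and finally invoking the uniqueness packaged in Corollary~4.3 (together with the Lagrangian-section criterion of the Remark to force $h=0$) --- is a legitimate fleshing-out of that hint. The coordinate identity $dz_1\wedge dz_2=(dc_1+i\,dc_2)\wedge(dt_1-i\,dt_2)$ and the observation that $\Gamma=\{t_1=t_2=0\}$ kills $Im(dz_1\wedge dz_2)$ are both straightforward, and the harmonic-conjugate ambiguity you flag indeed drops out upon differentiation, so the uniqueness step goes through.
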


\noindent \textbf{Remark.} Notice that in this case, $J_0$ and $\Omega$ is well defined on whole $\widetilde{W}$, not just on the regular part $\widetilde{W}_0$. Then the total space $\widetilde{W}$ can be well described as in \cite{38}.

\section{Semi-flat metric}

In this part, we study the canonical semi-flat metric \cite{3} \cite{26} on $\widetilde{W}_0$ constructed by the above action-angle coordinates. From now on, we just focus on the simple case with $h=0$. Notice that in such cases the background complex structure is fixed to be $J_0 = J_{au}$.

\begin{defn}
Given any $R \in \mathbb{R}_{+}$, the canonical semi-flat pseudo-metric on $(\widetilde{W}_0, \omega_{can}, S)$ is given by:
\begin{displaymath}
\omega^{sf} = \pi R \cdot Re(dZ_{\gamma_m} \wedge d\overline{Z}_{\gamma_e})+ \frac{1}{2\pi R} \cdot d\theta_{\gamma_m} \wedge d\theta_{\gamma_e}
\end{displaymath}
\end{defn}

It is easy to check this canonical form $\omega^{sf}$ is compatible with the gluing, and invariant under the monodromy transformation. Moreover, we have the following properties of the canonical form:
\begin{lem}
\begin{displaymath}
\omega^{sf} \wedge \omega^{sf} = \frac{1}{2} \Omega \wedge \overline{\Omega}, \quad \omega^{sf} \wedge \Omega =0, \quad \omega^{sf} \wedge \overline{\Omega} =0
\end{displaymath}
\end{lem}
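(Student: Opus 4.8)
The plan is to reduce all three identities to coefficient comparisons in a single adapted coframe on the regular part $\widetilde{W}_0$, exploiting the holomorphicity of the action integrals established in Lemma 1.4. The decisive structural fact is that, in the $h=0$ case, both $Z_{\gamma_e}$ and $Z_{\gamma_m}$ are holomorphic functions of the single complex base coordinate $c$: from the Corollary we have $Z_{\gamma_e}=-ic$, hence $dZ_{\gamma_e}=-i\,dc$, while $dZ_{\gamma_m}=\tau\,dc$ for the complex function $\tau:=dZ_{\gamma_m}/dc$. A short Wirtinger computation gives $\tau=\frac{1}{2\pi}\big[(S_1-\ln|c|)-i(S_2+\arg c)\big]$, so that in particular $\mathrm{Re}(\tau)=\frac{1}{2\pi}(S_1-\ln|c|)$; this is the coefficient that will have to match the one appearing in $\Omega\wedge\overline{\Omega}$ in the Corollary. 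The consequence I would lean on throughout is that $dZ_{\gamma_m}$ and $dZ_{\gamma_e}$ are both $(1,0)$-forms proportional to $dc$, so $dZ_{\gamma_m}\wedge dZ_{\gamma_e}=0$ and any wedge of two of them (or of two of their conjugates) vanishes.

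Next I would rewrite the two protagonists in the coframe $\{dc,d\overline{c},d\theta_{\gamma_e},d\theta_{\gamma_m}\}$. Factoring out $dc$ yields the clean product expression $\Omega=dc\wedge(\tau\,d\theta_{\gamma_e}-i\,d\theta_{\gamma_m})$, from which $\Omega\wedge\Omega=0$ is immediate and $\Omega\wedge\overline{\Omega}$ collapses, via $dc\wedge d\overline{c}=-2i\,dc_1\wedge dc_2$, to a single multiple of $dc_1\wedge dc_2\wedge d\theta_{\gamma_e}\wedge d\theta_{\gamma_m}$. For the semi-flat form, the point is that $\mathrm{Re}(dZ_{\gamma_m}\wedge d\overline{Z}_{\gamma_e})$ is a purely ``base'' $2$-form: since $dZ_{\gamma_m}\wedge d\overline{Z}_{\gamma_e}=2\tau\,dc_1\wedge dc_2$, we obtain $\omega^{sf}=2\pi R\,\mathrm{Re}(\tau)\,dc_1\wedge dc_2-\frac{1}{2\pi R}\,d\theta_{\gamma_e}\wedge d\theta_{\gamma_m}$, a sum of a base piece and a fibre piece with no mixed terms.

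With these normal forms in hand the three identities become one-line checks. For $\omega^{sf}\wedge\Omega$ (and its conjugate, which follows because $\omega^{sf}$ is real): the base piece of $\omega^{sf}$ annihilates against the $dc$ factor of $\Omega$ because three base $1$-forms $dc_1,dc_2,dc$ cannot coexist, while the fibre piece annihilates against the $d\theta$ factor of $\Omega$ by repetition of a $d\theta$; hence both vanish. For the volume identity, both $\omega^{sf}\wedge\omega^{sf}$ and $\tfrac12\,\Omega\wedge\overline{\Omega}$ reduce to a constant times $dc_1\wedge dc_2\wedge d\theta_{\gamma_e}\wedge d\theta_{\gamma_m}$, and I would simply compare constants: the cross term in $\omega^{sf}\wedge\omega^{sf}$ produces $-2\,\mathrm{Re}(\tau)$, while $\Omega\wedge\overline{\Omega}$ carries $-4\,\mathrm{Re}(\tau)$, so the factor $\tfrac12$ is exactly what reconciles them.

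The computation is essentially bookkeeping, so the only real obstacles are of that nature: keeping the wedge signs straight, fixing an orientation convention so the $\tfrac12$ lands correctly, and --- the one genuinely load-bearing point --- verifying that the constant $\mathrm{Re}(\tau)=\frac{1}{2\pi}(S_1-\ln|c|)$ produced by differentiating $Z_{\gamma_m}$ agrees with the coefficient appearing in the Corollary's evaluation of $\Omega\wedge\overline{\Omega}$. A pleasant sanity check that I expect to confirm the normalization is the automatic cancellation of $R$ in $\omega^{sf}\wedge\omega^{sf}$: the base and fibre pieces of $\omega^{sf}$ carry reciprocal powers $R$ and $R^{-1}$, so their product is $R$-independent, exactly as the $R$-free right-hand side $\tfrac12\,\Omega\wedge\overline{\Omega}$ demands. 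I would also note that using the frame $dc_1,dc_2,d\theta_{\gamma_e},d\theta_{\gamma_m}$ is legitimate only on $\widetilde{W}_0$ and after invoking invariance under the period-lattice gluing and monodromy, which the text has already asserted for $\omega^{sf}$.
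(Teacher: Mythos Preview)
The paper states this lemma without proof, treating it as a routine verification; your proposal correctly supplies that verification. Your reduction to the coframe $\{dc,d\overline{c},d\theta_{\gamma_e},d\theta_{\gamma_m}\}$, the factorization $\Omega=dc\wedge(\tau\,d\theta_{\gamma_e}-i\,d\theta_{\gamma_m})$, and the splitting of $\omega^{sf}$ into a pure base piece $2\pi R\,\mathrm{Re}(\tau)\,dc_1\wedge dc_2$ plus a pure fibre piece are exactly the computation the paper has in mind---indeed, the base piece you obtain agrees with the line $\pi R\cdot\mathrm{Re}(dZ_{\gamma_m}\wedge d\overline{Z}_{\gamma_e})=R(S_1-\ln|c|)\,dc_1\wedge dc_2$ that the paper records just after Proposition~5.3, and the coefficient $\mathrm{Re}(\tau)=\tfrac{1}{2\pi}(S_1-\ln|c|)$ matches the expression for $\Omega\wedge\overline{\Omega}$ in Corollary~4.3.
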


Thus we get $\omega^{sf}$ indeed a non-degenerate (1,1)-form with respect to $J_0 =J_{au}$. As we know in special geometry, generally $\omega^{sf}$ is just a pseudo-metric on $\widetilde{W}_0$. We still need to check the positivity condition here. From a direct computation, we have the following:

\begin{prop} The canonical form $\omega^{sf}$ gives a hyper-k\"{a}hler metric on $(\widetilde{W}_0, J_0)$ if and only $S_1 > \ln |c|$.
\end{prop}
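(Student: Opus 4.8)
The plan is to reduce the proposition to a single positivity statement and then settle it by a determinant computation reinforced by one sign check. By the standard correspondence in complex dimension two, a holomorphic symplectic form $\Omega$ together with a closed real $(1,1)$-form $\omega^{sf}$ obeying the normalisation $\omega^{sf}\wedge\omega^{sf}=\frac12\,\Omega\wedge\overline{\Omega}$ assemble into a hyperk\"ahler structure $(\omega^{sf},Re(\Omega),Im(\Omega))$ precisely when $\omega^{sf}$ is positive. Here $\Omega$ is the holomorphic symplectic form of Corollary 4.4, the normalisation is the first identity of Lemma 5.2, and $\omega^{sf}$ is closed because it is built from the (locally) exact forms $dZ_{\gamma_m}$, $d\overline{Z}_{\gamma_e}$, $d\theta_{\gamma_m}$, $d\theta_{\gamma_e}$ and, as already noted, descends to $\widetilde{W}_0$. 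Thus the whole proposition reduces to the claim that $\omega^{sf}$ is a positive $(1,1)$-form with respect to $J_0$ if and only if $S_1>\ln\abs{c}$.

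First I would record that $\omega^{sf}$ is genuinely of type $(1,1)$. In complex dimension two the bundles $\Lambda^{2,0}$ and $\Lambda^{0,2}$ are each one-dimensional, spanned by $\Omega$ and $\overline{\Omega}$, and for a real $2$-form the wedge with $\Omega$ isolates its $(0,2)$-part while the wedge with $\overline{\Omega}$ isolates its $(2,0)$-part. The two vanishings $\omega^{sf}\wedge\Omega=0$ and $\omega^{sf}\wedge\overline{\Omega}=0$ of Lemma 5.2 therefore force $\omega^{sf}$ to be purely $(1,1)$, so that $g^{sf}(\cdot,\cdot)=\omega^{sf}(\cdot,J_0\cdot)$ is symmetric and $J_0$-invariant. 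It is a hyperk\"ahler metric exactly when its associated Hermitian form $H$, expressed in the holomorphic coframe $\{dc,\,dt_1-i\,dt_2\}$ of Corollary 4.4, is positive definite.

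The core step is to read off the two relevant minors of $H$. From the normalisation $\omega^{sf}\wedge\omega^{sf}=\frac12\,\Omega\wedge\overline{\Omega}$ of Lemma 5.2 together with the identity $\Omega\wedge\overline{\Omega}=\frac2\pi(S_1-\ln\abs{c})\,dc_1\wedge dc_2\wedge d\theta_{\gamma_m}\wedge d\theta_{\gamma_e}$ of Corollary 4.3, the determinant $\det H$ is a positive multiple of $(S_1-\ln\abs{c})$, so the two eigenvalues of $H$ always share the sign of $S_1-\ln\abs{c}$. To pin that sign down I would restrict $\omega^{sf}$ to a single fibre $\{c=\mathrm{const}\}$, which is a $J_0$-holomorphic curve in the coordinate $t_1-i\,t_2$. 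On the fibre the base term $\pi R\,Re(dZ_{\gamma_m}\wedge d\overline{Z}_{\gamma_e})$ vanishes and only $\frac{1}{2\pi R}\,d\theta_{\gamma_m}\wedge d\theta_{\gamma_e}$ survives, reducing to a positive multiple of $(S_1-\ln\abs{c})^{-1}$ times the standard area form of the fibre; this computes the diagonal entry $h_{2\bar{2}}$ of $H$ and shows it too carries the sign of $S_1-\ln\abs{c}$. When $S_1>\ln\abs{c}$ both $\det H>0$ and $h_{2\bar{2}}>0$, so $H$ is positive definite by Sylvester's criterion; when $S_1\le\ln\abs{c}$ the form is degenerate or of signature $(1,1)$, hence not a metric. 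Combined with the reduction of the first paragraph, this is exactly the asserted equivalence.

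The step I expect to be the genuine obstacle is this last sign determination. The relation $\omega^{sf}\wedge\omega^{sf}=\frac12\,\Omega\wedge\overline{\Omega}>0$ only certifies $\det H>0$, i.e.\ that the two eigenvalues of $H$ agree in sign; it cannot by itself separate the positive-definite case from the negative-definite one, so without the supplementary fibre computation one could conclude only that $\omega^{sf}$ is definite, not that it is positive. If one prefers to avoid the explicit restriction, the same sign may be fixed by a continuity argument: on the locus $S_1>\ln\abs{c}$ the form $H$ is nowhere degenerate, hence of constant definiteness on each connected component, and it then suffices to exhibit positivity at a single point, which the fibre restriction again supplies.
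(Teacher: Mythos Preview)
Your argument is correct and genuinely different from the paper's. The paper expands $\omega^{sf}$ fully in the real frame $\{dc_1,dc_2,dt_1,dt_2\}$, introduces the abbreviations $m=S_{11}-c_1/|c|^2$, $n=S_{12}-c_2/|c|^2$, writes out the $4\times 4$ coefficient matrix, and then applies Sylvester's criterion to its leading minors; the three inequalities collapse to $S_1>\ln|c|$ after cancellation. You instead leverage the identities of Lemma~5.2 and Corollary~4.3 to read off $\det H$ directly, and then fix the remaining sign by the one-line fibre restriction $\omega^{sf}|_{\text{fibre}}=-\frac{1}{R(S_1-\ln|c|)}\,dt_1\wedge dt_2$. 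This buys you a much shorter proof that avoids the second-derivative terms $S_{11},S_{12}$ entirely and makes transparent \emph{why} the same inequality governing $\Omega\wedge\overline{\Omega}>0$ reappears here; the paper's approach, on the other hand, yields the explicit coordinate expression of $\omega^{sf}$, which it then reuses immediately afterwards to exhibit the decomposition $\omega^{sf}=R(S_1-\ln|c|)\,dc_1\wedge dc_2+i\partial\bar\partial\bigl[t_1^2/\bigl(R(S_1-\ln|c|)\bigr)\bigr]$.

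One small wording slip: after computing $\det H$ you write that ``the two eigenvalues of $H$ always share the sign of $S_1-\ln|c|$'', but at that stage you only know their \emph{product} has that sign; the individual signs are exactly what the subsequent fibre computation pins down. You clearly understand this, since your final paragraph says so explicitly, but the earlier sentence should be softened to avoid overstating what the determinant alone gives.
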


\begin{proof}

 Notice that we have the complex structure $J_0 = J_{au}$. Thus the induced complex structure on cooridnates $\{c_1,c_2; t_1, t_2\}$ is given by: $$J(dc_1) = dc_2, \ J(dt_1)=-dt_2$$

Consider the canonical form under the $\{ c_1,c_2,t_1,t_2 \}$ coordinates, which is explicitly given as follows:
\begin{align*} 
 \omega^{sf} &=  R (S_1-\ln|c| ) dc_1 \wedge dc_2 + \frac{1}{2\pi R}  d\theta_{\gamma_m} \wedge d\theta_{\gamma_e} \\ 
&= R (S_1-\ln|c| ) dc_1 \wedge dc_2  \\ 
 &\quad - \frac{t_1}{R(S_1 - \ln|c|)^2} [dS_2 + d \arg(c)] \wedge dt_1 + \frac{t_1}{R(S_1 - \ln|c|)^2} [dS_1 - d \ln|c|] \wedge dt_2\\ 
 &\quad + \frac{t_1^2}{R(S_1 - \ln|c|)^3} [dS_2 + d \arg (c)] \wedge  [dS_1 - d \ln|c|]    - \frac{1}{R(S_1 - \ln|c|)} dt_1 \wedge dt_2 
\end{align*}

 For abbreviation, we take some notations here:
 $$ m = S_{11} - \frac{c_1}{|c|^2}, \ n = S_{12} - \frac{c_2}{|c|^2}$$
 
 Then we continue the calculation and get the simplification:
 \begin{align*}  
 \omega^{sf} &= \left[ R (S_1-\ln|c| ) + \frac{t_1^2 (m^2 + n^2)  }{R(S_1 - \ln|c|)^3}  \right] dc_1 \wedge dc_2\\
 & \quad  - \frac{t_1}{R(S_1 - \ln|c|)^2} [n dc_1 -m dc_2] \wedge dt_1 \\
 & \quad  + \frac{t_1}{R(S_1 - \ln|c|)^2} [m dc_1 +n dc_2] \wedge dt_2 \\ 
 & \quad  - \frac{1}{R(S_1 - \ln|c|)} dt_1 \wedge dt_2 
\end{align*}

 By Sylvester's criterion, the positivity condition goes to:
 \begin{align*} 
& S_1 - \ln|c| >0 , \quad \left[  R (S_1-\ln|c| ) + t_1^2 \cdot \frac{m^2 + n^2  }{R(S_1 - \ln|c|)^3}  \right] > 0 \\
 &\left[ R(S_1-\ln|c| ) + t_1^2 \cdot \frac{ m^2 + n^2  }{R(S_1 - \ln|c|)^3}  \right] \cdot \frac{1}{R(S_1 - \ln|c|)}  > [\frac{t_1 n}{R(S_1 - \ln|c|)^2} ]^2 + [\frac{t_1 m}{R(S_1 - \ln|c|)^2} ]^2
 \end{align*}
Previously we already have the condition for $\Omega$ in Corollary 4.3, that is: $S_1 > \ln |c|$. Since $R \in \mathbb{R}_{+}$, thus the final condition goes to:
$
S_1 > \ln |c|.
$
\end{proof}

Therefore, for the semi-flat metric, we still need the same condition: $S_1 > \ln |c|$ as for the holomorphic 2-form $\Omega$, which happens to coincide with the positive asymptotic condition for the Ooguri-Vafa potential at infinity. 

Furthermore, if we pick complex coordinates as $u_1 = c_1 +i c_2, u_2 = t_1 -it_2$, then from the above calculation, we will get a decomposition of the semi-flat metric. In fact, we have:
\begin{align*}
& \pi R \cdot Re(dZ_{\gamma_m} \wedge d\overline{Z}_{\gamma_e}) = R[S_1 - \ln|c|]dc_1 \wedge dc_2 \\
& \frac{1}{2\pi R} \cdot d\theta_{\gamma_m} \wedge d\theta_{\gamma_e} =  i\partial \bar{\partial} \left[ \frac{t_1^2}{R(S_1 - \ln |c|)} \right] = i\partial \bar{\partial} \left[ \frac{S_1-\ln|c|}{4\pi^2 R} \cdot \theta^2_{\gamma_e} \right]
\end{align*}

Notice that $\left( S_1 - \ln|c| \right) $ is a harmonic function on the base $B_0$, thus the first part generally is not a Ricci flat metric on the base, unless it is a flat one. The second part is a pseudo-metric on $\widetilde{W}_0$ since the metric is positive-semidefinite. This decomposition also indicates that $[\omega^{sf}] \ne 0$ in $H^{1,1}(\widetilde{W}_0, J_0)$. \\

\noindent \textbf{Remark.} It is easy to see that in fact we can generalize the canonical semi-flat metric by admitting the parameter $R$ to be a suitable positive functions which is compatible with the gluing and monodromy condition. Similar results as in Lemma 5.2 are still valid, however $\omega^{sf}$ is not be K\"{a}hler anymore.\\

\noindent \textbf{Remark.} Notice that if the $h$ term in the holomorphic form $\Omega$ is non-vanishing, then it is easy to check $\omega \wedge \Omega \ne 0$, and $\omega \wedge \bar{\Omega} \ne 0$, thus the canonical form $\omega^{sf}$ cannot be a (1,1) form or a K\"{a}hler metric. Moreover, in this case, the complex structure determined by $\Omega$ is not $J_{au}$ anymore. It is not easy to write down the explicit expression of the complex structure in general case. Therefore, although we start the construction on the real completely integral system, so far we are just able to carry out further calculation in the complex integrable system, i.e. $h$ term vanishing case. We'd like to explore the general case in further studies.

\section{instanton correction}

Now we consider the instanton correction of the semi-flat metric.

The main strategy here is that we do not make the modification directly on the semi-flat metric, but in stead on its associated twistor space. Then we transfer the correction problem into a Riemann-Hilbert problem of solving the monodromy of the associated holomorphic Darboux coordinates. Finally, we adapt GMN's integral ansatz and read out the metric from the twistor space determined by modified holomorphic Darboux coordinates.

First we consider the twistor space of $\widetilde{W_0}$. By the previous construction of holomorphic 2-form $\Omega$ and K\a"{a}hler form $\omega^{sf}$, we can write the family of holomorphic 2-forms as:
\begin{displaymath}
\varpi^{sf}(\zeta) = -\frac{1}{2 \zeta} \cdot \Omega + \omega^{sf} +\frac{1}{2} \zeta  \cdot \overline{\Omega}, \quad  \zeta \in \mathbb{C}P^1
\end{displaymath}

Here we have an important observation by Gaiotto, Moore and Neitzke. In fact, we can represent the holomorphic 2-forms by holomorphic Darboux coordinates:

\begin{lem} [\cite{12}] The $\mathbb{C}P^1$-family of holomorphic 2-form can be rearranged into the form:
\begin{displaymath}
\varpi^{sf}(\zeta) = \frac{1}{2\pi R} \cdot \frac{d \chi_{\gamma_m}^{sf}}{\chi_{\gamma_m}^{sf}} \wedge \frac{d \chi_{\gamma_e}^{sf}}{\chi_{\gamma_e}^{sf}}
\end{displaymath}
here the holomorphic Darboux coordinates are given by:
\begin{displaymath}
\chi_{\gamma_m}^{sf} = \exp [i \cdot \frac{\pi R}{\zeta}\cdot Z_{\gamma_m} - i \theta_{\gamma_m} - i \cdot \pi R\zeta \cdot \overline{Z}_{\gamma_m}]
\end{displaymath}
\begin{displaymath}
\chi_{\gamma_e}^{sf} = \exp [i \cdot \frac{\pi R}{\zeta}\cdot Z_{\gamma_e} + i \theta_{\gamma_e} - i \cdot \pi R\zeta \cdot \overline{Z}_{\gamma_e}]
\end{displaymath} 
\end{lem}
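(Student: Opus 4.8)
The plan is to verify the claimed factorization by a direct computation, exploiting that the right-hand side is already displayed as a wedge of two closed logarithmic $1$-forms. First I would take the logarithmic differentials of the stated Darboux coordinates. Since $Z_{\gamma_m}, Z_{\gamma_e}$ are holomorphic on $B_0$ (Lemma 4.2) and $\zeta$ is constant along each fibre of the twistor family, these are
\begin{align*}
\frac{d\chi_{\gamma_m}^{sf}}{\chi_{\gamma_m}^{sf}} &= i\frac{\pi R}{\zeta}\, dZ_{\gamma_m} - i\, d\theta_{\gamma_m} - i\pi R\zeta\, d\overline{Z}_{\gamma_m}, \\
\frac{d\chi_{\gamma_e}^{sf}}{\chi_{\gamma_e}^{sf}} &= i\frac{\pi R}{\zeta}\, dZ_{\gamma_e} + i\, d\theta_{\gamma_e} - i\pi R\zeta\, d\overline{Z}_{\gamma_e}.
\end{align*}

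Then I would wedge the two $1$-forms, pull out the common factor $i^2 = -1$, multiply by $\frac{1}{2\pi R}$, and organize the result as a Laurent polynomial in $\zeta$ whose coefficients are $2$-forms. The terms of order $\zeta^{\pm 2}$ carry $dZ_{\gamma_m}\wedge dZ_{\gamma_e}$ and $d\overline{Z}_{\gamma_m}\wedge d\overline{Z}_{\gamma_e}$; both vanish because the Cauchy-Riemann relation established in the proof of Corollary 4.3 gives $dZ_{\gamma_m}\wedge dZ_{\gamma_e}=0$ together with its conjugate. Collecting the $\zeta^{-1}$ and $\zeta^{1}$ coefficients, I would recognize the combination $dZ_{\gamma_m}\wedge d\theta_{\gamma_e}+dZ_{\gamma_e}\wedge d\theta_{\gamma_m}$, which by Corollary 4.3 (in the $h=0$ case we are working in) is exactly $\Omega$; this reproduces the $-\frac{1}{2\zeta}\Omega$ and $+\frac{1}{2}\zeta\overline{\Omega}$ pieces of $\varpi^{sf}(\zeta)$.

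The remaining, and principal, task is the $\zeta^0$ coefficient, which must be matched against $\omega^{sf}$. After the bookkeeping it equals $\frac{\pi R}{2}\left(dZ_{\gamma_m}\wedge d\overline{Z}_{\gamma_e} - dZ_{\gamma_e}\wedge d\overline{Z}_{\gamma_m}\right) + \frac{1}{2\pi R}\, d\theta_{\gamma_m}\wedge d\theta_{\gamma_e}$. Here I would invoke the reality identity $\overline{dZ_{\gamma_m}\wedge d\overline{Z}_{\gamma_e}} = -\, dZ_{\gamma_e}\wedge d\overline{Z}_{\gamma_m}$, which turns the first bracket into precisely $2\,\mathrm{Re}(dZ_{\gamma_m}\wedge d\overline{Z}_{\gamma_e})$; comparing with Definition 5.1 then identifies the whole $\zeta^0$ coefficient with $\omega^{sf}$.

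The step I expect to demand the most care is the sign and normalization bookkeeping: correctly tracking the $i^2=-1$ factor, the antisymmetrizations incurred when reordering wedge products, and the overall $\frac{1}{2\pi R}$, so that the three surviving powers of $\zeta$ assemble into exactly $-\frac{1}{2\zeta}\Omega + \omega^{sf} + \frac{1}{2}\zeta\overline{\Omega}$ with no stray constants. Since the logarithmic $1$-forms are manifestly closed (the $Z$'s being holomorphic and the $\theta$'s coordinates), the factorized form simultaneously exhibits the Darboux property, which is the reason this reformulation is the natural starting point for the Riemann-Hilbert problem treated next.
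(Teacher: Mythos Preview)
Your verification is correct: the Laurent expansion in $\zeta$ goes through exactly as you outline, with the $\zeta^{\pm 2}$ terms killed by $dZ_{\gamma_m}\wedge dZ_{\gamma_e}=0$ from Corollary~4.3, the $\zeta^{\pm 1}$ terms assembling into $-\tfrac{1}{2\zeta}\Omega$ and $\tfrac{\zeta}{2}\overline{\Omega}$ via the $h=0$ form of $\Omega$, and the $\zeta^{0}$ term matching Definition~5.1 after your reality identity.

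The paper does not actually supply a proof of this lemma; it is quoted from \cite{12}. The only place an equivalent verification appears is inside the proof of Corollary~6.4 (the ``semi-flat part''), and there the route is different: rather than expanding in powers of $\zeta$, the authors rewrite $\frac{d\chi_{\gamma_m}^{sf}}{\chi_{\gamma_m}^{sf}}$ in Gibbons--Hawking form as $-i\,d\theta_{\gamma_m}+2\pi i A^{sf}+\pi i V^{sf}(\zeta^{-1}dc-\zeta\, d\bar c)$ plus a multiple of $\xi_e$, using the explicit semi-flat potentials $V^{sf},A^{sf}$ of the focus-focus model. Your approach is model-independent and is the natural proof of the lemma as stated; the paper's computation is tailored to matching against the Ooguri--Vafa twistor data, which is what Corollary~6.4 ultimately needs.
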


\noindent \textbf{Remark.}
Recall we have the action-angle coordinates of $\Omega$ given by:
\begin{align*}
Z_{\gamma_m} &= \frac{1}{2\pi} \cdot [c -c \cdot \ln c +(S + i \cdot \widetilde{S})], \qquad \quad Z_{\gamma_e} = -i \cdot c
\\ \theta_{\gamma_e} &= \frac{2\pi \cdot t_1}{S_1 - \ln|c|},\qquad \qquad \qquad \theta_{\gamma_m} = t_2 -\frac{S_2+ \arg c}{S_1 - \ln |c|} \cdot t_1
\end{align*}

According to the monodromy transformation:
\begin{displaymath}
Z_{\gamma_m} \rightarrow Z_{\gamma_m}+ Z_{\gamma_e}, \quad \theta_{\gamma_m} \rightarrow \theta_{\gamma_m} -  \theta_{\gamma_e}
\end{displaymath}
the pairing in the Darboux coordinates is the unique one which induces the complex structure, although with the global monodromy: 
\begin{displaymath}
\chi_{\gamma_m}^{sf} \rightarrow \chi_{\gamma_m}^{sf} \cdot \chi_{\gamma_e}^{sf}
\end{displaymath}

Now comes the main idea of GMN's construction. We should carry out instanton correction on the $\mathbb{C}P^1$-family of holomorphic 2-forms by solving the monodromy issue for the whole family of holomorphic Darboux coordinates simultaneously. 

The basic idea to achieve this is to produce the inverse monodromy to cancel the original one. The main tool we need is the Cauchy-Plemelj-Sokhotskii formula for Riemann-Hilbert problem, that is:
\begin{thm} Take a smooth simple curve $l$ on $\mathbb{C}$, for every $C^{0,\alpha}(l)$ function $\varphi$ on $l$, there exist an unique piecewise holomorphic function on $\mathbb{C}$, which is:

1) continuously extendable from $l_+$ to $\overline{l}_+$ as well as from $l_-$ to $\overline{l}_-$,

2) vanishes for large $|z|$,

3) it has the monodormy: $f_{+}-f_{-}= \varphi$ on $l$,

\noindent Moreover, such function is given by the Cauchy-Plemelj-Sokhotskii formula:
\begin{displaymath}
f(z)= \frac{1}{2\pi i} \int_l \frac{\varphi}{t-z}dt
\end{displaymath}
\end{thm}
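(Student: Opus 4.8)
The plan is to verify directly that the Cauchy-type integral
$$f(z) = \frac{1}{2\pi i} \int_l \frac{\varphi(t)}{t-z}\, dt$$
possesses all three claimed properties, and then to establish uniqueness by a separate argument. First I would observe that $f$ is holomorphic on $\mathbb{C} \setminus l$: the integrand is holomorphic in $z$ away from $l$ and the curve is compact, so differentiation under the integral sign is justified and produces a holomorphic derivative. Property (2) is then immediate, since for large $|z|$ the kernel $1/(t-z)$ is bounded by $O(1/|z|)$ uniformly in $t \in l$, forcing $f(z) \to 0$ at infinity.

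The technical heart is property (3), the jump relation, which I would obtain from the Sokhotski--Plemelj boundary formula. Fix $t_0 \in l$ away from the endpoints and split $\varphi(t) = [\varphi(t)-\varphi(t_0)] + \varphi(t_0)$, giving
$$f(z) = \frac{1}{2\pi i}\int_l \frac{\varphi(t)-\varphi(t_0)}{t-z}\,dt + \frac{\varphi(t_0)}{2\pi i}\int_l \frac{dt}{t-z}.$$
For the first integral the hypothesis $\varphi \in C^{0,\alpha}(l)$ makes the numerator vanish like $|t-t_0|^{\alpha}$, so the integrand is absolutely integrable and the term extends continuously to $t_0$ from either side, its common limit being the principal-value integral. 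The second integral is elementary: as $z \to t_0$ from the two sides of $l$, a local winding computation shows $\frac{1}{2\pi i}\int_l \frac{dt}{t-z}$ tends to $+\tfrac12$ from $l_+$ and to $-\tfrac12$ from $l_-$. Combining the two pieces, the boundary values are
$$f_\pm(t_0) = \pm\tfrac12\,\varphi(t_0) + \frac{1}{2\pi i}\,\mathrm{p.v.}\!\int_l \frac{\varphi(t)}{t-t_0}\,dt,$$
which are continuous in $t_0$ and whose difference is exactly $f_+ - f_- = \varphi$. This simultaneously yields the continuous extendability asserted in property (1).

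For uniqueness I would suppose $g$ also satisfies (1)--(3) and set $h = f - g$. Then $h$ is holomorphic off $l$, extends continuously to each side, and has vanishing jump $h_+ - h_- = 0$ across $l$. By Morera's theorem the two pieces glue across $l$ into a single entire function, and since both $f$ and $g$ vanish at infinity so does $h$; Liouville's theorem then forces $h \equiv 0$.

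I expect the main obstacle to be the rigorous justification of the boundary limits in the Plemelj step --- controlling the singular integral uniformly as $z$ approaches $l$ and confirming that the splitting above genuinely produces the $\pm\tfrac12$ jump alongside a continuous principal-value term. This is precisely where the H\"{o}lder regularity of $\varphi$ is indispensable: mere continuity would neither guarantee existence of the principal value nor the continuous extendability required in property (1).
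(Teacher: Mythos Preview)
The paper does not supply its own proof of this theorem: it is quoted as the classical Cauchy--Plemelj--Sokhotskii formula and used as a black box, so there is nothing to compare your argument against. Your outline is the standard textbook proof of this result and is essentially correct.

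One small remark: you invoke compactness of $l$ for the vanishing at infinity and for differentiation under the integral, but the paper's phrasing (``smooth simple curve'') and its later application to rays $l_\pm$ joining $0$ and $\infty$ suggest the intended setting may allow unbounded contours. In that case the decay at infinity and the absolute convergence of the integral would have to come instead from decay of $\varphi$ along $l$, not from boundedness of $l$. Similarly, if $l$ is an open arc rather than a closed Jordan curve, your uniqueness argument via Morera and Liouville needs a word about behavior at the endpoints of $l$, where the Cauchy integral can develop mild (integrable) singularities; without controlling these, the glued function need not be entire. None of this undermines your approach---it is exactly the right one---but in a fully rigorous write-up you would want to state the hypotheses on $l$ and $\varphi$ precisely enough to cover these points.
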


Take into account of further compatible conditions for the twistor space \cite{26}, we have GMN's integral ansatz for the holomorphic Darboux coordinates:
\begin{thm} [\cite{12}] The instanton corrected holomorphic Darboux coordinates on $\widetilde{W}_0$ can be given by the following integral formulas:
\begin{align} \notag
\chi_{\gamma_e} &= \chi_{\gamma_e}^{sf} \\ \notag
\chi_{\gamma_m} &= \chi_{\gamma_m}^{sf} \cdot \exp \left[ 
\frac{i}{4\pi}\int_{l_+} \frac{d\zeta'}{\zeta'}\frac{\zeta'+\zeta}{\zeta'-\zeta}\ln(1-\textit{X}_{\gamma_e}(\zeta'))- \frac{i}{4\pi}\int_{l_-} \frac{d\zeta'}{\zeta'}\frac{\zeta'+\zeta}{\zeta'-\zeta}\ln(1-\textit{X}^{-1}_{\gamma_e}(\zeta'))
\right]
\end{align}
Consequently, the instanton corrected holomorphic 2-forms are given by:
\begin{displaymath}
\varpi(\zeta) = \frac{1}{2\pi R} \cdot \frac{d \chi_{\gamma_m}(\zeta)}{\chi_{\gamma_m}(\zeta)} \wedge \frac{d \chi_{\gamma_e}(\zeta)}{\chi_{\gamma_e}(\zeta)}
\end{displaymath}
Here $l_{\pm}$ are the rays connecting $0$ and $\infty$, which are away from the BPS rays: $\{ \zeta  \ |\  Re\frac{c}{\zeta} =0 \}$ determined by holomorphic discs bounding vanishing cycle, otherwise the above integral will diverge.

\end{thm}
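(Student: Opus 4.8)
The plan is to read the stated formula as the unique solution of a Riemann--Hilbert (RH) problem on the twistor line $\mathbb{C}P^1_\zeta$ and to produce it by the Cauchy--Plemelj--Sokhotskii formula recalled above. First I would pin down exactly what the corrected $\chi_{\gamma_e},\chi_{\gamma_m}$ must solve. The semi-flat coordinates of Lemma 6.1 are holomorphic and nowhere vanishing on $\mathbb{C}^*_\zeta$, but once the holomorphic discs bounding the vanishing cycle $\gamma_e$ are turned on, crossing a BPS ray must act on the coordinates by the Kontsevich--Soibelman symplectomorphism $\mathcal{K}_{\gamma_e}\colon \chi_\gamma\mapsto \chi_\gamma(1-\chi_{\gamma_e})^{\langle\gamma,\gamma_e\rangle}$. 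Thus the corrected $\chi_\gamma(\zeta)$ are characterized by three conditions: (i) jumps across the two admissible rays $l_\pm$ equal to $\mathcal{K}_{\gamma_e}^{\pm1}$; (ii) the normalization $\chi_\gamma/\chi^{sf}_\gamma\to 1$ as $\zeta\to0$ and $\zeta\to\infty$; (iii) the reality condition under the antipodal involution of $\mathbb{C}P^1$ coming from the quaternionic structure \cite{26}. I want to verify that the displayed integral is precisely this solution.

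The step I would emphasise next is the decoupling special to the focus--focus ($A_1$) case. The Dirac pairing of the vanishing and dual cycles gives $\langle\gamma_e,\gamma_e\rangle=0$ and $\langle\gamma_m,\gamma_e\rangle=1$, so $\mathcal{K}_{\gamma_e}$ fixes $\chi_{\gamma_e}$ and only multiplies $\chi_{\gamma_m}$ by a power of $(1-\chi_{\gamma_e})$. Hence $\chi_{\gamma_e}=\chi^{sf}_{\gamma_e}=X_{\gamma_e}$ receives no correction, and the generally nonlinear GMN integral equation degenerates: writing $\log\chi_{\gamma_m}=\log\chi^{sf}_{\gamma_m}+\Psi(\zeta)$, the function $\Psi$ must be holomorphic on $\mathbb{C}^*_\zeta\setminus(l_+\cup l_-)$, vanish at $0$ and $\infty$, and have additive jumps $\log(1-X_{\gamma_e})$ across $l_+$ and $\log(1-X_{\gamma_e}^{-1})$ across $l_-$, where now $X_{\gamma_e}$ is a \emph{known} function rather than an unknown. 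This is a linear, scalar, additive RH problem, so no iteration is required and its solution is unique once (ii)--(iii) are imposed.

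To solve it I would apply the Cauchy--Plemelj--Sokhotskii formula recalled above, upgraded to the twistor-adapted kernel. The key algebraic identity is $\frac{1}{\zeta'}\frac{\zeta'+\zeta}{\zeta'-\zeta}=\frac{2}{\zeta'-\zeta}-\frac{1}{\zeta'}$: the first term carries the Plemelj jump along $l_\pm$, while the subtracted $\frac{1}{\zeta'}$ together with the factor $\frac{\zeta'+\zeta}{\zeta'-\zeta}$ enforce $\Psi(0)=0$ and the antipodal reality symmetry simultaneously. Feeding $\varphi=\log(1-X_{\gamma_e})$ on $l_+$ and $\varphi=\log(1-X_{\gamma_e}^{-1})$ on $l_-$ into this kernel reproduces precisely the displayed exponent, the overall constant $\frac{i}{4\pi}$ being fixed by matching the jump magnitude (the factor $2$ against the $4\pi$). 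The 2-form assertion then follows with no extra work: each $\mathcal{K}_{\gamma_e}$ is a Poisson map preserving $\frac{d\chi_{\gamma_m}}{\chi_{\gamma_m}}\wedge\frac{d\chi_{\gamma_e}}{\chi_{\gamma_e}}$, so the locally defined $\varpi(\zeta)$ glues across $l_\pm$ into a single holomorphic family, which degenerates to $\varpi^{sf}(\zeta)$ when the correction exponent is switched off.

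The hard part will be analytic rather than algebraic: convergence, contour independence, and reality of the integral. Here I would use that $Z_{\gamma_e}=-i\cdot c$, so the factor $\exp[i\pi R Z_{\gamma_e}/\zeta-i\pi R\zeta\overline{Z}_{\gamma_e}]$ makes $|X_{\gamma_e}|$ decay exponentially at both ends of any ray on which $\mathrm{Re}(c/\zeta)$ keeps a definite sign; this forces $\log(1-X_{\gamma_e})$ to be integrable along admissible $l_+$ and $\log(1-X_{\gamma_e}^{-1})$ along $l_-$, whereas on a BPS ray $\{\mathrm{Re}(c/\zeta)=0\}$ the decay is lost and the integral diverges, which is exactly the stated restriction. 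It then remains to check that the boundary values of $\Psi$ from the two sides of $l_\pm$ reproduce the correct $\mathcal{K}_{\gamma_e}$ and not its inverse, and that sliding $l_\pm$ within a chamber bounded by consecutive BPS rays alters $\chi_{\gamma_m}$ only by the expected wall-crossing factor, so that the corrected twistor space, and hence the resulting metric, is independent of the admissible choice of rays.
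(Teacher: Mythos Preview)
The paper does not supply its own proof of this theorem: it is stated with citation to Gaiotto--Moore--Neitzke \cite{12}, preceded only by a one-sentence motivation (``Take into account of further compatible conditions for the twistor space \cite{26}, we have GMN's integral ansatz\ldots'') together with the Cauchy--Plemelj--Sokhotskii formula recorded as Theorem~6.2. Your proposal is a correct and detailed expansion of precisely that motivation --- you formulate the Riemann--Hilbert problem with the Kontsevich--Soibelman jump $\mathcal{K}_{\gamma_e}$, exploit the $A_1$ decoupling $\langle\gamma_e,\gamma_e\rangle=0$ to reduce to a linear scalar problem in $\log\chi_{\gamma_m}$, and then solve it by the twistor-adapted Cauchy kernel $\frac{1}{\zeta'}\frac{\zeta'+\zeta}{\zeta'-\zeta}$, checking convergence off the BPS rays and the reality/normalization conditions. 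This is exactly the argument the paper defers to \cite{12} and \cite{26}, so there is nothing to compare: you have written out what the paper merely cites.
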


Now we adapt this integral formula and compute the new twistor space after instanton correction on our local model.

\begin{cor} After the instanton correction given by GMN ansatz, we get the modified twistor space given as:
\begin{displaymath}
\varpi(\zeta) = \frac{1}{2\pi R} \cdot \xi_m \wedge \xi_e
\end{displaymath}
where:
\begin{align*}
\xi_m &= -i d \theta_{\gamma_m} + 2\pi i \cdot A + \pi i \cdot V \cdot ( \frac{1}{\zeta} d c - \zeta d \overline{c} ) \\
\xi_e &= i d \theta_{\gamma_e} + \pi R \cdot ( \frac{1}{\zeta} d c + \zeta d \overline{c})
\end{align*}
and the potential function here is given by:
\begin{displaymath}
V =  \frac{R}{4\pi} \cdot \left( \sum_{n \in \mathbb{Z}}[\frac{1}{\sqrt{ R^2 |c|^2 + (\frac{\theta_{\gamma_e}}{2\pi} + n)^2  }}  - \kappa(n) ] + 2S_1 \right)
\end{displaymath}
\end{cor}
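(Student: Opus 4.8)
The plan is to start from the GMN integral ansatz for $\chi_{\gamma_m}$ given in Theorem 6.3, take its logarithmic derivative, and show that the instanton-corrected one-forms $d\chi_{\gamma_m}/\chi_{\gamma_m}$ and $d\chi_{\gamma_e}/\chi_{\gamma_e}$ reorganize into the claimed $\xi_m$ and $\xi_e$. Since $\chi_{\gamma_e}=\chi_{\gamma_e}^{sf}$ is uncorrected, the expression for $\xi_e$ should follow directly by differentiating $\log\chi_{\gamma_e}^{sf}=i\frac{\pi R}{\zeta}Z_{\gamma_e}+i\theta_{\gamma_e}-i\pi R\zeta\overline{Z}_{\gamma_e}$ and substituting $Z_{\gamma_e}=-ic$, so that $\frac{\pi R}{\zeta}dZ_{\gamma_e}=-\frac{\pi R}{\zeta}i\,dc$ and $\pi R\zeta\,d\overline{Z}_{\gamma_e}=\pi R\zeta i\,d\overline{c}$ combine into $\pi R(\frac{1}{\zeta}dc+\zeta\,d\overline{c})$. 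I would first verify this short computation to pin down the overall normalization and sign conventions before touching the harder factor.

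For $\xi_m$ the main work is to evaluate $d\log\chi_{\gamma_m}=d\log\chi_{\gamma_m}^{sf}+d(\text{integral correction})$. The semi-flat piece contributes $i\frac{\pi R}{\zeta}dZ_{\gamma_m}-id\theta_{\gamma_m}-i\pi R\zeta\,d\overline{Z}_{\gamma_m}$; I expect the $-id\theta_{\gamma_m}$ to survive verbatim as the first term of $\xi_m$, while the $dZ_{\gamma_m}$ and $d\overline{Z}_{\gamma_m}$ terms must merge with the contour-integral correction. The key identity to establish is that differentiating the GMN integral and summing the $l_+$ and $l_-$ contributions reproduces the Ooguri-Vafa potential $V$ and the connection form $A$. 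The standard route, following \cite{12} and \cite{28}, is to expand $\ln(1-X_{\gamma_e})=-\sum_{n\geq1}\frac{1}{n}X_{\gamma_e}^n$, perform the $\zeta'$-integral term by term using the integral representation of the modified Bessel functions $K_0$ and $K_1$, and recognize the resulting Fourier series in $\theta_{\gamma_e}$ as exactly the instanton expansions $V^{inst}$ and $A^{inst}$ computed in Proposition 3.3. Combined with the semi-flat zero modes $V^{sf}$ and $A^{sf}$ (where $V^{sf}$ supplies the $\ln|c|$ and $S_1$ terms), these assemble into the full $V$ and $A=A^{sf}+A^{inst}$, giving the stated $\xi_m=-id\theta_{\gamma_m}+2\pi i A+\pi i V(\frac{1}{\zeta}dc-\zeta\,d\overline{c})$.

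I would organize the Bessel-function reduction as a separate lemma so the main corollary reads as a clean assembly: (i) record the semi-flat contribution, (ii) invoke the contour-integral evaluation identifying the correction with $V^{inst}$ and $A^{inst}$, (iii) collect terms into $\xi_m$ and $\xi_e$, and (iv) substitute into $\varpi=\frac{1}{2\pi R}\frac{d\chi_{\gamma_m}}{\chi_{\gamma_m}}\wedge\frac{d\chi_{\gamma_e}}{\chi_{\gamma_e}}$. Throughout I would use that $Z_{\gamma_e}=-ic$ is holomorphic so that $dZ_{\gamma_e}$ and $d\overline{Z}_{\gamma_e}$ are proportional to $dc$ and $d\overline{c}$, which is what lets the $\frac{1}{\zeta}$ and $\zeta$ structure emerge cleanly.

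The hard part will be the contour integration in step (ii): one must check convergence of the $\zeta'$-integral away from the BPS rays $\{Re(c/\zeta)=0\}$, justify the term-by-term interchange of the sum over wrapping number $n$ with the integral, and correctly track the kernel $\frac{\zeta'+\zeta}{\zeta'-\zeta}$ so that differentiation in $c$, $\overline{c}$, and $\theta_{\gamma_e}$ produces precisely the $K_0$ term (feeding $V^{inst}$) and the $K_1$ term (feeding $A^{inst}$) with matching coefficients and signs. This is where sign and factor-of-$2\pi$ bookkeeping is most error-prone; once the integral is shown to equal the Ooguri-Vafa instanton series from Proposition 3.3, the remaining assembly is formal.
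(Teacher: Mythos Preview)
Your high-level strategy---split into semi-flat and instanton parts, expand the logarithm as a geometric series, and match the resulting contour integrals to the Bessel series for $V^{inst}$ and $A^{inst}$---is exactly the paper's, and your identification of $\xi_e$ with $d\chi_{\gamma_e}/\chi_{\gamma_e}$ is correct. However, your expectation that $d\log\chi_{\gamma_m}$ will assemble into $\xi_m$ as a $1$-form is wrong, and this is where the plan would stall. Already at the semi-flat level one has
\[
\frac{d\chi^{sf}_{\gamma_m}}{\chi^{sf}_{\gamma_m}}
=\xi_m^{sf}-\frac{i}{4\pi}\bigl(\ln c-\ln\bar c+2iS_2\bigr)\,\xi_e,
\]
so there is an unavoidable remainder proportional to $\xi_e$; in particular $\xi_m^{sf}$ carries the $d\theta_{\gamma_e}$-term coming from $A^{sf}$, while $d\log\chi^{sf}_{\gamma_m}$ has no $d\theta_{\gamma_e}$-component at all. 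The identity therefore holds only after wedging with $\xi_e$, and the paper works throughout at the level of the $2$-form $\varpi(\zeta)$, never asserting equality of $1$-forms.

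This is not merely cosmetic: wedging first is what makes the instanton integrals tractable. The paper's key manoeuvre is to form $(I_+ + I_-)\wedge\xi_e$ and use $\xi_e\wedge\xi_e=0$ to replace $d\log\chi_{\gamma_e}(\zeta')$ under the integral by the difference $d\log\chi_{\gamma_e}(\zeta')-d\log\chi_{\gamma_e}(\zeta)$. In that difference the $id\theta_{\gamma_e}$ pieces cancel, and the remaining $dc$, $d\bar c$ coefficients are $\pi R\bigl(\tfrac{1}{\zeta'}-\tfrac{1}{\zeta}\bigr)$ and $\pi R(\zeta'-\zeta)$, which absorb the pole of the kernel $\tfrac{\zeta'+\zeta}{\zeta'-\zeta}$ and leave only monomials $1$, $\zeta'$, $1/\zeta'$ in the integrand. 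It is precisely these monomial integrals $\int_{l_\pm}\tfrac{d\zeta'}{\zeta'}(\zeta')^{k}\chi_{\gamma_e}(\zeta')^{\pm n}$ that reduce to $K_0$ (for $k=0$, feeding $V^{inst}$) and $K_1$ (for $k=\pm1$, feeding $A^{inst}$). If instead you differentiate the integral in $c,\bar c,\theta_{\gamma_e}$ first and then try to integrate term by term, the rational kernel $\tfrac{\zeta'+\zeta}{\zeta'-\zeta}$ survives and the $\zeta'$-integrals are no longer standard Bessel representations. So the missing ingredient in your step~(ii) is: wedge with $\xi_e$ \emph{before} evaluating the contour integrals, exploit the self-wedge vanishing to subtract $d\log\chi_{\gamma_e}(\zeta)$, and only then expand and integrate.
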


\begin{proof} The proof of the identity contains the semi-flat part and the instanton part.\\

\textit{Semi-flat part.} For the semi-flat part, we need to check the following identity:
\begin{displaymath}
\varpi^{sf}(\zeta) =  \frac{1}{2\pi R} \cdot \frac{d \chi^{sf}_{\gamma_m}(\zeta)}{\chi^{sf}_{\gamma_m}(\zeta)} \wedge \frac{d \chi_{\gamma_e}(\zeta)}{\chi_{\gamma_e}(\zeta)}
\end{displaymath}
Notice that on the left side, we have:
\begin{align*}
\varpi^{sf}(\zeta) & = \frac{1}{2\pi R} \cdot \xi^{sf}_m \wedge \xi_e \\
& =  \frac{1}{2\pi R} \left[ -i d \theta_{\gamma_m} + 2\pi i \cdot A^{sf} + \pi i \cdot V^{sf} \cdot ( \frac{1}{\zeta} d c - \zeta d \overline{c} ) \right] \wedge  \xi_e \\
& = \frac{1}{2\pi R} \left[ -i d \theta_{\gamma_m} + 2\pi i \cdot A^{sf} + \pi i \cdot V^{sf} \cdot ( \frac{1}{\zeta} d c - \zeta d \overline{c} ) \right] \wedge \frac{d \chi_{\gamma_e}(\zeta)}{\chi_{\gamma_e}(\zeta)}
\end{align*}
here
\begin{align*}
V^{sf} &= - \frac{R}{4\pi} \left( \ln c + \ln \overline{c} -2 S_1   \right) \\
A^{sf} &= \frac{i}{8\pi^2} \left( \ln c - \ln \overline{c} + 2i S_2  \right) d\theta_{\gamma_e}
\end{align*}

Moreover, a direct calculation verifies that:
\begin{displaymath}
\frac{d \chi^{sf}_{\gamma_{m}}(\zeta)}{\chi^{sf}_{\gamma_m}(\zeta)} = 
\left[ -i d\theta_{\gamma_m} + 2\pi i \cdot A^{sf} + \pi i \cdot V^{sf} (\frac{1}{\zeta}dc - \zeta d\overline{c} ) \right] - \frac{i}{4\pi} \left(\ln c - \ln \overline{c} + 2i  S_2 \right) \frac{d \chi_{\gamma_e}(\zeta)}{\chi_{\gamma_e}(\zeta)} 
\end{displaymath}
Thus we get the identity for the semi-flat part.\\

\textit{Instanton part.} For the instanton part, we need to check the following identity:
\begin{displaymath}
\varpi^{inst}(\zeta) =  \frac{1}{2\pi R} \cdot \frac{d \chi^{inst}_{\gamma_m}(\zeta)}{\chi^{inst}_{\gamma_m}(\zeta)} \wedge \frac{d \chi_{\gamma_e}(\zeta)}{\chi_{\gamma_e}(\zeta)}
\end{displaymath}

The whole calculation is similar as given in \cite{12}. For completeness, we outline the main steps here, which will show how the instanton correction will appear from contour integrals. More details and explanations can be found in \cite{12} and \cite{30}.

Notice that on the left side, we have:
\begin{align*}
\varpi^{inst}(\zeta) & = \frac{1}{2\pi R} \cdot \xi^{inst}_m \wedge \xi_e \\
& = \frac{1}{2\pi R} \left[ 2\pi i A^{inst} + \pi i V^{inst} \left( \frac{1}{\zeta}dc - \zeta d\overline{c} \right)   \right]  \wedge \xi_e
\end{align*}
here
\begin{align*}
V^{inst} &= \frac{R}{2\pi}  \sum_{n \ne 0} e^{in\theta_{\gamma_e}} K_0(2\pi R|nc|) \\
A^{inst} &= -\frac{R}{4\pi} \left( \frac{d c}{c} - \frac{d \overline{c}}{\overline{c}} \right) \sum_{n \ne 0} sign(n) \cdot e^{in\theta_{\gamma_e}}|c| K_1(2\pi R|nc|)
\end{align*}

On the right side, first let us take the partial integrals:
\begin{displaymath}
I_{\pm} = - \frac{i}{4\pi} \int_{l_{\pm}} \frac{d\zeta'}{\zeta'} \frac{\zeta'+\zeta}{\zeta'-\zeta} \left[ \frac{\chi_{\gamma_e}(\zeta')^{\pm 1}}{1- \chi_{\gamma_e}(\zeta')^{\pm 1}}\frac{d \chi_{\gamma_e}(\zeta')}{\chi_{\gamma_e}(\zeta')}  \right]
\end{displaymath}

Then the right side of identity goes to:
\begin{displaymath}
\frac{1}{2\pi R} \cdot (I_{+} + I_{-}) \wedge \frac{d \chi_{\gamma_e}(\zeta)}{\chi_{\gamma_e}(\zeta)}
\end{displaymath}

Thus the identity we want to verify can be simplified into:
\begin{displaymath}
 (I_{+} + I_{-}) \wedge \frac{d \chi_{\gamma_e}(\zeta)}{\chi_{\gamma_e}(\zeta)} = \left[ 2\pi i A^{inst} + \pi i V^{inst} \left( \frac{1}{\zeta}dc - \zeta d\overline{c} \right)   \right]  \wedge \xi_e 
\end{displaymath}

Now let us explicitly compute the left side terms. Notice that:
\begin{align*}
I_{\pm} \wedge \frac{d \chi_{\gamma_e}(\zeta)}{\chi_{\gamma_e}(\zeta)} 
& = \frac{i}{4\pi} \int_{l_{\pm}} \frac{d \zeta'}{\zeta'} \left( \frac{\zeta'+\zeta}{\zeta'-\zeta} \cdot \frac{d \chi_{\gamma_e}(\zeta)}{\chi_{\gamma_e}(\zeta)} \wedge \frac{d \chi_{\gamma_e}(\zeta')}{\chi_{\gamma_e}(\zeta')} \right)
\left[\frac{\chi_{\gamma_e}(\zeta')^{\pm 1}}{1 - \chi_{\gamma_e}(\zeta')^{\pm 1}} \right]\\
& = \frac{i}{4\pi} \int_{l_{\pm}} \frac{d \zeta'}{\zeta'} \left( \frac{\zeta'+\zeta}{\zeta'-\zeta} \cdot \frac{d \chi_{\gamma_e}(\zeta)}{\chi_{\gamma_e}(\zeta)} \wedge 
\left[ 
\frac{d \chi_{\gamma_e}(\zeta')}{\chi_{\gamma_e}(\zeta')} 
- \frac{d \chi_{\gamma_e}(\zeta)}{\chi_{\gamma_e}(\zeta)}
\right] \right)
\left[\frac{\chi_{\gamma_e}(\zeta')^{\pm 1}}{1 - \chi_{\gamma_e}(\zeta')^{\pm 1}} \right]\\
& = \frac{i}{4\pi} \int_{l_{\pm}} \frac{d \zeta'}{\zeta'} \left( -\pi R \cdot \frac{d \chi_{\gamma_e}(\zeta)}{\chi_{\gamma_e}(\zeta)} \wedge 
\left[
\left( \frac{1}{\zeta'} + \frac{1}{\zeta} \right) dc - \left(\zeta'+\zeta \right) d\overline{c} 
\right] \right)
\left[\frac{\chi_{\gamma_e}(\zeta')^{\pm 1}}{1 - \chi_{\gamma_e}(\zeta')^{\pm 1}} \right]
\end{align*}

Take the arrangement:
\begin{align*}
L^{+}_1 &= \int_{l_+} \frac{d \zeta'}{\zeta'}\left( \frac{1}{\zeta}dc - \zeta d\overline{c} \right) \left[\frac{\chi_{\gamma_e}(\zeta')}{1 - \chi_{\gamma_e}(\zeta')} \right] \\
L^{-}_1 &= \int_{l_-} \frac{d \zeta'}{\zeta'}\left( \frac{1}{\zeta}dc - \zeta d\overline{c} \right) \left[\frac{\chi_{\gamma_e}(\zeta')^{- 1}}{1 - \chi_{\gamma_e}(\zeta')^{- 1}} \right] \\
L^{+}_2 &= \int_{l_+} \frac{d \zeta'}{\zeta'}\left( \frac{1}{\zeta'}dc - \zeta' d\overline{c} \right) \left[\frac{\chi_{\gamma_e}(\zeta')}{1 - \chi_{\gamma_e}(\zeta')} \right] \\
L^{-}_2 &= \int_{l_-} \frac{d \zeta'}{\zeta'}\left( \frac{1}{\zeta'}dc - \zeta' d\overline{c} \right) \left[\frac{\chi_{\gamma_e}(\zeta')^{- 1}}{1 - \chi_{\gamma_e}(\zeta')^{- 1}} \right] 
\end{align*}

Thus we have the simplification:
\begin{align*}
 (I_{+} + I_{-}) \wedge \frac{d \chi_{\gamma_e}(\zeta)}{\chi_{\gamma_e}(\zeta)} = \frac{iR}{4}(L^{+}_1 + L^{-}_1 + L^{+}_2 + L^{-}_2) \wedge \frac{d \chi_{\gamma_e}(\zeta)}{\chi_{\gamma_e}(\zeta)} 
\end{align*}

Now we are ready to calculate the contour integrals. During the computation, we need some identity for Bessel functions.
Notice that after expanding the geometric series, we get the series labeled by $n$:
\begin{align*}
\int_{l_+} \frac{d \zeta'}{\zeta'} \frac{\chi_{\gamma_e}(\zeta')}{1 - \chi_{\gamma_e}(\zeta')} & = \sum_{n > 0} 2e^{in\theta_{\gamma_e}} K_0(2\pi R |nc|) \\
\int_{l_-} \frac{d \zeta'}{\zeta'} \frac{\chi_{\gamma_e}(\zeta')^{-1}}{1 - \chi_{\gamma_e}(\zeta')^{-1}} & = \sum_{n < 0} 2e^{in\theta_{\gamma_e}} K_0(2\pi R |nc|) 
\end{align*}
Thus, we have the first important identity about the $V^{inst}$ part:
\begin{align*}
\frac{iR}{4} (L^{+}_1 + L^{-}_1) \wedge \frac{d \chi_{\gamma_e}(\zeta)}{\chi_{\gamma_e}(\zeta)} & = \frac{iR}{2} \sum_{n \ne 0} e^{in\theta_e} K_0(2\pi R |nc|) \left( \frac{1}{\zeta}dc - \zeta d\overline{c}    \right) \wedge \frac{d \chi_{\gamma_e}(\zeta)}{\chi_{\gamma_e}(\zeta)} \\
& =\pi i V^{inst} \left( \frac{1}{\zeta}dc - \zeta d\overline{c}    \right) \wedge  \frac{d \chi_{\gamma_e}(\zeta)}{\chi_{\gamma_e}(\zeta)} \\
& = \pi i V^{inst} \left( \frac{1}{\zeta}dc - \zeta d\overline{c}    \right) \wedge \xi_e
\end{align*}
Moreover, we have the identity:
\begin{align*}
\int_{l_+} \frac{d \zeta'}{\zeta'}\zeta' \frac{\chi_{\gamma_e}(\zeta')}{1 - \chi_{\gamma_e}(\zeta')} & = - \sum_{n > 0} 2 \frac{|c|}{\overline{c}} e^{in\theta_{\gamma_e}} K_1(2\pi R |nc|) \\
\int_{l_+} \frac{d \zeta'}{\zeta'}\frac{1}{\zeta'} \frac{\chi_{\gamma_e}(\zeta')}{1 - \chi_{\gamma_e}(\zeta')} & = - \sum_{n > 0} 2 \frac{|c|}{c} e^{in\theta_{\gamma_e}} K_1(2\pi R |nc|) 
\end{align*}
Similarly:
\begin{align*}
\int_{l_-} \frac{d \zeta'}{\zeta'}\zeta' \frac{\chi_{\gamma_e}(\zeta')^{-1}}{1 - \chi_{\gamma_e}(\zeta')^{-1}} & =  \sum_{n < 0} 2 \frac{|c|}{\overline{c}} e^{in\theta_{\gamma_e}} K_1(2\pi R |nc|) \\
\int_{l_-} \frac{d \zeta'}{\zeta'}\frac{1}{\zeta'} \frac{\chi_{\gamma_e}(\zeta')^{-1}}{1 - \chi_{\gamma_e}(\zeta')^{-1}} & =  \sum_{n < 0} 2 \frac{|c|}{c} e^{in\theta_{\gamma_e}} K_1(2\pi R |nc|) 
\end{align*}

Thus $L^{\pm}$ will contribute to the rest part of the identity. We get the second important identity about the $A^{inst}$ part:
\begin{align*}
\frac{iR}{4} (L^{+}_2 + L^{-}_2) \wedge \frac{d \chi_{\gamma_e}(\zeta)}{\chi_{\gamma_{e}}(\zeta)} & = 
\left[ -\frac{iR}{2} \left( \frac{dc}{c} - \frac{d \overline{c}}{\overline{c}}\right)\sum_{n \ne 0} sign(n) \cdot e^{in\theta_{\gamma_e}} |c| K_1(2\pi R|nc|)   \right] \wedge \frac{d \chi_{\gamma_e}(\zeta)}{\chi_{\gamma_e}(\zeta)} \\
& = 2\pi i A^{inst} \wedge \frac{d \chi_{\gamma_e}(\zeta)}{\chi_{\gamma_e}(\zeta)} \\
& = 2\pi i A^{inst} \wedge \xi_e 
\end{align*}

Combine the two important identities about the instanton contribution, finally we finish the proof of the instanton part through:
\begin{align*}
 (I_{+} + I_{-}) \wedge \frac{d \chi_{\gamma_e}(\zeta)}{\chi_{\gamma_e}(\zeta)} & = \frac{iR}{4}(L^{+}_1 + L^{-}_1 + L^{+}_2 + L^{-}_2) \wedge \frac{d \chi_{\gamma_e}(\zeta)}{\chi_{\gamma_e}(\zeta)} \\
 & = \left[ 2\pi i A^{inst} + \pi i V^{inst} \left( \frac{1}{\zeta}dc - \zeta d\overline{c} \right)   \right]  \wedge \xi_e
\end{align*}
\end{proof}

Then follow the standard procedure from twistor space to the metric, we are able to read out the explicit metric. However, here we adapt an alternative way. In fact, from direct comparison to the twistor space $\varpi_{o.v.}(\zeta) $ of Ooguri-Vafa metric with potential function $V$, we can figure out the twistor space $\varpi(\zeta) $ we constructed above is just a rescaling by the constant of $2\pi$, that is $\varpi(\zeta) = 2\pi \varpi_{o.v.}(\zeta) $. 

Notice that because of the orientation issue of the local model, here we need to take a negative sign for $\theta_{\gamma_m}$. Up to this orientation adjustment, we end with the following result:

\begin{thm} Given a harmonic semi-global invariant $S$ with $S_1 > \ln |c|$ and positivity condition $(**)$, the twistor structure determined by $\varpi(\zeta) $ gives a construction of metric $2\pi g$ on $\widetilde{W}_0$, here $g$ is the generalized Ooguri-Vafa metric with potential function $V$ given as above.
\end{thm}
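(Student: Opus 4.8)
The plan is to read the hyperk\"{a}hler metric off the twistor datum $\varpi(\zeta)$ produced in Corollary 6.3 and then identify it with the generalized Ooguri-Vafa metric by a direct comparison of twistor spaces, as anticipated in the remark preceding the statement. The guiding principle is the standard twistor-to-metric dictionary: a genuine $O(2)$-twisted family is a Laurent polynomial $\varpi(\zeta) = -\frac{1}{2\zeta}\Omega + \omega + \frac{1}{2}\zeta\overline{\Omega}$, and its coefficients recover the holomorphic symplectic form $\Omega$ (the $\zeta^{-1}$ part), the K\"{a}hler form $\omega$ (the $\zeta^{0}$ part), and $\overline{\Omega}$ (the $\zeta^{1}$ part); the metric is then $g(\cdot,\cdot)=\omega(J\cdot,\cdot)$ with $J$ the complex structure fixed by $\Omega$ via Andreotti's argument. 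Since Corollary 6.3 already supplies $\varpi(\zeta)$ in the closed form $\frac{1}{2\pi R}\,\xi_m\wedge\xi_e$, the analytically delicate work of assembling the instanton series into a pole-free family is behind us, and only the extraction and matching remain.

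First I would expand $\varpi(\zeta)=\frac{1}{2\pi R}\,\xi_m\wedge\xi_e$ in powers of $\zeta$, collecting the $\zeta^{-1}$, $\zeta^{0}$ and $\zeta^{1}$ coefficients. Because $\xi_m$ and $\xi_e$ depend on $\zeta$ only through $\frac{1}{\zeta}dc\pm\zeta\,d\overline{c}$, this is a short wedge-product computation: it confirms that $\varpi(\zeta)$ is a true $O(2)$ family with poles only at $\zeta=0,\infty$ and the correct reality behaviour under $\zeta\mapsto-1/\overline{\zeta}$, and it shows that the $\zeta^{0}$ coefficient is, up to the overall factor $2\pi$, the Gibbons-Hawking K\"{a}hler form built from the potential $V$ and connection $A$, namely a base term proportional to $V\,dc_1\wedge dc_2$, a mixed term proportional to $A\wedge d\theta_{\gamma_e}$, and a fibre term proportional to $d\theta_{\gamma_m}\wedge d\theta_{\gamma_e}$. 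The $\zeta^{\pm1}$ coefficients then reproduce $\Omega$ and $\overline{\Omega}$ and pin down the complex structure, so the metric is completely determined.

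Next I would invoke the Gibbons-Hawking ansatz \cite{14}: for a positive harmonic potential on an open subset of $\mathbb{R}^2\times S^1$ minus its singular point, the ansatz produces a hyperk\"{a}hler metric whose twistor space is exactly $\varpi_{o.v.}(\zeta)$. The generalized $V$ here differs from the standard Ooguri-Vafa potential only by the harmonic term $2S_1$; harmonicity of $S$ (Lemma 1.3) keeps $V$ harmonic, while the positivity condition $(**)$ together with $S_1>\ln|c|$ forces $V>0$ away from the origin, so the ansatz applies and yields a genuine hyperk\"{a}hler metric $g$. A coefficient-by-coefficient comparison then gives $\varpi(\zeta)=2\pi\,\varpi_{o.v.}(\zeta)$, after reversing the sign of $\theta_{\gamma_m}$ to absorb the orientation of the local model; since a twistor space determines its metric uniquely, the metric read off from $\varpi(\zeta)$ is precisely $2\pi g$.

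I expect the main obstacle to lie in the positivity and nondegeneracy bookkeeping rather than in the algebra. One must verify that the two distinct positivity inputs---the semi-flat condition $S_1>\ln|c|$ from Corollary 4.3 and Proposition 5.3, and the fibre-direction condition $(**)$ keeping the full Fourier-summed potential $V$ positive along the $\theta_{\gamma_e}$-axis---combine to produce a globally positive-definite metric on all of $\widetilde{W}_0$, with control near the excised singular fibre. A secondary point is confirming that the Bessel-function instanton series assembled in Corollary 6.3 sums to the same closed-form potential $V$ that enters the Gibbons-Hawking description, so that the two twistor families agree identically and not merely asymptotically; this is exactly the content of the Fourier and Bessel identities used there. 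Once these positivity and convergence checks are secured, the identification of the resulting metric with $2\pi g$ follows immediately from the uniqueness of the twistor-to-metric correspondence.
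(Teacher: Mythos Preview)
Your proposal is correct and follows essentially the same route as the paper. The paper's argument is very terse: having established the closed form $\varpi(\zeta)=\frac{1}{2\pi R}\,\xi_m\wedge\xi_e$ in the preceding corollary, it simply compares this directly with the known twistor family $\varpi_{o.v.}(\zeta)$ of the generalized Ooguri--Vafa metric, observes the rescaling $\varpi(\zeta)=2\pi\,\varpi_{o.v.}(\zeta)$ and the sign flip on $\theta_{\gamma_m}$, and concludes by twistor uniqueness. Your plan spells out this comparison more explicitly---expanding in $\zeta$, checking the reality condition, and tracking the positivity hypotheses---but the underlying mechanism is identical.
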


Notice that the metric we directly get from the above construction is just defined on the regular part $\widetilde{W}_0$
of the fibration over $B_0 = \{ 0< |c| < \epsilon \}$. One natural question is how to carry out certain completion of the metric, by adding the central fibre. Here, we adapt the partial completion through Ooguri-Vafa space as follows.\\

\noindent \textbf{Partial completion}. Since now $\widetilde{W}_0$ is equipped with the metric $2\pi g$, by the property of Ooguri-Vafa space, we have a partial metric completion of $\widetilde{W}_0$ by addition a central fibre, which comes from the following isometric embedding:
\begin{align*}
i: (\widetilde{W}_0,2\pi g) &\longrightarrow (M_{O.V.},2\pi g) \\
(z_{\gamma_e}, z_{\gamma_m}, \theta_{\gamma_m}, \theta_{\gamma_e}) &\longmapsto (z_e, z_m, -\theta_{m}, \theta_{e})
\end{align*}
Here the space $M_{O.V.}$, and also the coordinates are explicitly given as in the Example 2.

This partial completion is a little tricky here, since it is not directly from $\widetilde{W}_0$ to $\widetilde{W}$, although we know $M_{O.V.}$ is homeomorphism to $\widetilde{W}$ as the total space of same topological torus fibration. 
The main reason we choose the above approach through embedding $i$ is the following: if originally $\omega_{can}$ comes from a hyperk\"{a}hler metric $g_0$ on the local model, then from construction we can see $ g_{0}|_{ \widetilde{W}_0 } $ it is always different from the metric $2\pi g$ determined by the twistor space $\varpi(\zeta)$.

We make more discussion from the point view of Lagrangian fibration here. Recall that twistor strucutre of $(M_{O.V.},2\pi g)$ is given by $2\pi \varpi_{o.g.}(\zeta)$. With respect to the symplectic structure: $ 2\pi \omega_{o.v.} = -4\pi Re \varpi_{o.g.}(0) $, $M_{O.V.}$ has the Lagrangian fibration structure given by:
\begin{align*}
f: M_{O.V.} & \longrightarrow B \\
f(c_1,c_2; \theta_{m}, \theta_{e}) & = (c_1, c_2)
\end{align*}

As for the local model $\widetilde{W}$, we have the global symplectic structure given by:
$ \omega_{can} = Re(dz_1 \wedge dz_2)$. Its Lagrangian fibration is simply given by:
\begin{align*}
\pi_{can}: \widetilde{W} & \longrightarrow B \\
\pi_{can}(z_1,z_2) & = (c_1,c_2) = \left( Re(z_1z_2), Im(z_1z_2) \right)
\end{align*}

Notice that on the regular part, we have the important relation from the embedding map:
\begin{displaymath}
\omega_{can}|_{\widetilde{W}_0} = i^* ( 2\pi \omega^{sf}_{o.v.} )
\end{displaymath}

Thus we find $i: (\widetilde{W}_0,\omega_{can}) \longrightarrow (M_{O.V.}, \omega_{o.v.})$ is not a symplectic embedding (or equivalently $\omega_{can}|_{\widetilde{W}_0} \ne -2 Re \varpi(0)$), since $\theta_{\gamma_e}, \theta_{\gamma_m}$ are angle coordinates on the left side, however $\theta_e, \theta_m$ are not angle coordinates on the right side, as showed in Property 3.1. The difference essentially can be read out from similar identity as in Property 3.1. The infinite terms of instanton deformation of the angel coordinates here are directly achieved through GMN's integration formula.\\

After the above partial completion, finally we get a re-construction of the Ooguri-Vafa metric (up to a rescaling by $2\pi$ ) with the generalized potential function on a focus-focus fibration through GMN's construction of hyperk\"{a}hler on completely integrable systems. The type of metrics we get coincides with the one used in Gross and Wilson's work on approximation construction of hyperk\"{a}hler metric on elliptic K3. Morever, from point of view of focus-focus fibration, we unfold the geometric meaning of the extra harmonic term in the potential function $V$. In fact, it indicates the semi-global invariant of the integrable system. Such dynamic interpretation also serves as a positive evidence that GMN's project might work finally on the global picture \cite{12} \cite{21}.

\section{Discussion}

At the end, we make some further discussion about the whole construction. Compared to the Gibbons-Hawking ansatz on $S^1$ fibration, the GMN ansatz acts as a construction of hyperk\"{a}hler metric on torus fibration with singular fibres. They require totally different ingredients and technics. However, on the local model, they still have certain relations.\\

\noindent $\bullet$ $A_n$ singularity case:

In our paper, we just study the one singularity case of the Ooguri-Vafa potential, which corresponds to the $A_1$ singularity case. Generally, it is similar to construct the metric on $A_n$ singularity case. Use the GMN ansatz in the local model, we are also able to construct such metrics. For example, as constructed in \cite{27}, we take $n$ copies of the local model with the same semi-global invariant $\{ (W_i, \omega, S) \}$, and denote the Poincare surfaces by $\Gamma_{i,1} \equiv \Gamma_1$, and $\Gamma_{i,2} \equiv \Gamma_2$. Then we make the following sequence of gluing:
\begin{displaymath}
\coprod_{i=1}^n (W_i, \omega, S) / \{ \Gamma_{i,2} \sim \Gamma_{i+1,1} \}
\end{displaymath}
here $\Gamma_{n+1,1} = \Gamma_{1,1}$ and $\Gamma_{n+1,2} = \Gamma_{1,2}$. 

Since the compatible property of the final metric $2\pi g$ on $\widetilde{W}_0$, we just need to take the same metric on each copy and glue them together. After the similar partial completion procedure, we thus get a smooth hyperk\"{a}hler metric in the $A_n$ singularity case.\\

\noindent $\bullet$ $S^1$-symmetry 

As we know, the Ooguri-Vafa metric has the isometric tri-hamiltonian $S^1$ action which is inherited from the Gibbons-Hawking ansatz \cite{2}. However in the GMN ansatz, there is no \textit{a priori} reason such symmetry will appear in the result. Thus it turns out to be interesting in the local model so far we still cannot get a new metric without such symmetry. It may require a further bundle symplectic automorphism (but not holomorphic automorphism) to break such symmetry. \\

\noindent $\bullet$ Removable singularity of GMN ansatz

In the local model, the extension over singular fibre property of the final metric is borrowed from the good property of Ooguri-Vafa metric. However, in the general case, even for two singular fibres case, we have no such auxiliary metric to make the extension work directly. One possible approach comes from the recent development of geometric analysis on the removable singularity of K\"{a}hler-Einstein metrics \cite{5} \cite{6} \cite{37}. GMN ansatz on elliptic K3 may need similar results for hyperk\"{a}hler metric or even twistor space to justify the extension property of the final metric.
\\
\\
\\
\\

\noindent \textbf{Acknowledgements.} 

This work was partially supported by Institute for Basic Science (IBS). The author would like to thank the IBS center for Geometry and Physics in Korea for providing financial supports and excellent environments for research. 

The author appreciate his advisor Prof. Yong-Geun Oh for his suggestion of the topic of focus-focus fibration and helpful discussion on this paper. Moreover, the author appreciate Prof. Andrew Neitzke for the continuous help and discussion during the author's study of the GMN construction.  At last, the author would like to thank Chengjian Yao and Mario Garcia Fernandez for inspiring discussions on the Ooguri-Vafa metric and related topics.


\begin{thebibliography}{99}

\bibitem{1} V.I. Arnold, A.N. Varchenko and S.M. Gusein-Zade, \textsl{Singularities of differentiable maps}, Chapter 10, Birkhauser Boston, 1st edition (1985).

\bibitem{2} R. Bielawski, \textsl{Complete hyper-K\"{a}hler 4n-manifolds with a local tri-Hamiltonian Rn-action}, Math. Ann. 314, Umber 3 (1999), 505-528.

\bibitem{3} S. Cecotti, S. Ferrara, and L. Girardello, \textsl{Geometry of type II superstrings and the
moduli of superconformal field theories}, Int. J. Mod. Phys. A4 (1989) 2475.

\bibitem{4} K. Chan, \textsl{The Ooguri-Vafa metric, holomorphic discs and wall-crossing}, Math. Res. Lett. 17, Number 3 (2010), 401-414.

\bibitem{5} X.X. Chen, S. Donaldson, S. Sun, \textsl{K\"{a}hler-Einstein metrics on Fano manifolds, III: limits as cone angle approaches 2π and completion of the main proof}, Preprint 2013, math. DG/1302.0282. 

\bibitem{6} X.X. Chen and W.Y. He, \textsl{On the Calabi flow}, Amer. J. Math.
Volume 130, Number 2 (2008), 539-570.

\bibitem{7} J.J. Duistermaat, \textsl{On global action-angle coordinates}, Comm. Pure Appl. Math., 33 (1980), 687-706.

\bibitem{8} H.R. Dullin, \textsl{Semi-global symplectic invariants of the spherical pendulum}, J. Diff. Equa. (2013), 254 (7), 2942-2963

\bibitem{9} L.H. Eliasson, \textsl{Normal forms for Hamiltonian systems with Poisson commuting integrals- elliptic case}, Commen. Math. Helv. 65.1 (1990), 4-35.

\bibitem{10} K. Fukaya, \textsl{Mutivalued Morse theory, asymptotic analysis and mirror symmetry}, Graphs and patterns in mathematics and theoretical physics, 2005, 205-278.

\bibitem{11} K. Fukaya, Y.-G. Oh, H. Ohta, and K. Ono., \textsl{Lagrangian intersection Floer theory: anomaly and obstruction}, AMS/IP Studies in Advanced Mathematics, 46.1. American Mathematical Society, Providence, RI, and International Press, Somerville, MA, 2009.

\bibitem{12} D. Gaiotto, G. Moore and A. Neitzke, \textsl{Four-dimensional wall-crossing via three-dimensional field theory},  Commun. Math. Phys. 299 (2010) 163-224.

\bibitem{13} D. Gaiotto, G. Moore and A. Neitzke, \textsl{Wall crossing, Hitchin system, and the WKB approximation}, Adv. Math. 234 (2013), 239-403.

\bibitem{14} G.W. Gibbons and S.W. Hawking, \textsl{Gravitational multi-instantons}, Phys. Lett. B 78 (1978), 430-432.

\bibitem{15} B.R. Greene, A. Shapere, C. Vafa, and S.-T. Yau, \textsl{Stringy cosmic strings and non-compact Calabi-Yau manifolds}, Nuclear Phys. B 337, Number 1 (1990), 1-36.

\bibitem{16} M. Gross, \textsl{Special Lagrangian Fibrations I: Topology}, Integrable Systems and Algebraic Geometry,
(M.-H. Saito, Y. Shimizu and K. Ueno eds.), World Scientiﬁc 1998, 156-193.

\bibitem{17} M. Gross and P.M.H. Wilson, \textsl{Large complex structure limits of K3 surfaces}, J. Diff. Geom. Volume 55, Number 3 (2000), 475-546.

\bibitem{18} N.J. Hitchin, A. Karlhede, U. Lindstr\"{o}m, and M. Ro\v{c}ek, \textsl{Hyperk\"{a}hler metrics and supersymmetry}, Comm. Math. Phys. Volume 108, Number 4 (1987), 535-589.

\bibitem{19} D. Huybrechts, \textsl{Generalized Calabi-Yau structures, K3 surfaces, and B-field}, Int. J. Math. 16 (2005) 13.

\bibitem{20} M. Kilian, \textsl{On the Riemann Hilbert problem}, thesis.

\bibitem{21} M. Kontsevich and Y. Soibelman, \textsl{Stability structures, motivic Donalson-Thomas invariants and cluster transformations}, Preprint 2008, math. AG/0811.2435.

\bibitem{22} M. Kontsevich and Y. Soibelman, \textsl{Homological  mirror symmetry and torus fibrations},  in ``Symplectic Geometry and Mirror Symmetry", Proceedings of  4th KIAS conference, Eds. K. Fukaya, Y.-G. Oh, K. Ono and G. Tian, World Scientific, 2001.

\bibitem{23} P. B. Kronheimer, \textsl{Monopoles and Taub-Nut metrics}, M.Sc. Dissertation, Oxford 1985.

\bibitem{24} Y.-S. Lin, \textsl{Open Gromov-Witten invariants on elliptic K3 surfaces and wall-crossing}, in preparation.

\bibitem{25} W.X. Lu, \textsl{SYZ mirror symmetry of Hitchin's moduli spaces near singular fibers I}, Preprint, math. DG/1208.3714. 

\bibitem{26} A. Neitzke, \textsl{Notes on a new construction of hyperk\"{a}hler metrics}, Preprint 2013, math. DG/1308.2198.

\bibitem{27} San V\~{u} Ng\d{o}c, \textsl{On semi-global invariants for focus-focus singularities}, Topology 42 (2003), 365-380.

\bibitem{28} H. Ooguri and C. Vafa, \textsl{Summing up Dirichlet instantons}, Phys. Rev. Lett. 77, Number 16 (1996), 3296-3298.

\bibitem{29} A. Pelayo, San V\~{u} Ngoc, \textsl{Semitoric integrable systems on symplectic 4-manifolds}, Invent. Math.
177 (2009), no. 3, 57-97.

\bibitem{30} J. Stoppa, \textsl{Joyce-Song wall-crossing as an asymptotic expansion}, Preprint 2011, math. AG/1112.2174.

\bibitem{31} A. Strominger, S.-T. Yau, and E. Zaslow, \textsl{Mirror symmetry is T-duality}, Nuclear Phys. B 479, Number 1-2 (1996), 243-259.

\bibitem{32} G. Tian, \textsl{K-stability and K\"{a}hler-Einstein metrics}, Preprint 2012, math. DG/1211.4669.
 
\bibitem{33} G. Tian, S.-T. Yau, \textsl{Complete K\"{a}hler manifolds with zero Ricci curvature I}, J. Amer. Math.
Soc. 3 (1990), 579-609.

\bibitem{34} G. Tian, S.-T. Yau, \textsl{Complete K\"{a}hler manifolds with zero Ricci curvature II}, Invent. Math.
106 (1991), 27-60.

\bibitem{35} K.K. Uhlenbeck, \textsl{Removable singularities in Yang-Mills fields}, Comm. Math. Phys. Volume 83, Number 1 (1982), 11-29.

\bibitem{36} S.-T. Yau, \textsl{On the Ricci curvature of a compact K\"{a}hler manifold and the complex Monge-Amp\`{e}re equation. I}, Comm. Pure Appl. Math. 31, Number 3 (1978), 339-411.

\bibitem{37} Y. Zeng, \textsl{Removable singularities of the CscK metric}, Preprint 2013, math. DG/1309.0180.

\bibitem{38} N.T. Zung, \textsl{A note on focus-focus singularities}, Differential Geom. Appl. 7 (1997), 123-130.
\end{thebibliography}
\end{document}